\theoremstyle{plain}
\newtheorem{thm}{\bf Theorem}[section]
\newtheorem{Mainthm}{\bf Theorem}
\newtheorem{prop}[thm]{\bf Proposition}
\newtheorem{lem}[thm]{\bf Lemma}
\newtheorem{cor}[thm]{\bf Corollary}
\theoremstyle{definition}
\newtheorem{defn}[thm]{\bf Definition}
\theoremstyle{remark}
\newtheorem{rem}[thm]{\bf Remark}
\theoremstyle{example}
\def \supp{{\mathrm{supp}}}
\def \a{\mathbf a}
\def \1{\mathbf 1}
\def \c{\mathrm{core}}
\def \reg{\mathrm{reg}}
\def \depth{\mathrm{depth}}
\def \pd{\mathrm{pd}}
\def \bight{\mathrm{bight}}
\def \star{\mathrm{star}}
\def \lk{\mathrm{link}}
\def \Del{\Delta}
\def \set{\mathrm{set}}
\def \NN{\mathbb N}
\def \ZZ{\mathbb Z}
\def \fm{\mathfrak m}
\def \fn{\mathfrak n}
\def \F{\mathcal F}
\def \C{\mathcal C}
\begin{document}

\title[On the Stanley-Reisner ideal of an expanded simplicial complex]{On the Stanley-Reisner ideal of an expanded simplicial complex}
\author{Rahim Rahmati-Asghar and Somayeh Moradi}

\keywords{simplicial complex, expansion functor, Stanley-Reisner ideal, facet ideal}

\subjclass[2010]{Primary 13F55, 13D02; Secondary 05E45.}

\begin{abstract}
 Let $\Delta$ be  a simplicial complex.  We study the expansions of $\Delta$ mainly to see how the algebraic and combinatorial properties of $\Delta$ and its expansions are related to each other. It is shown that $\Delta$ is Cohen-Macaulay, sequentially Cohen-Macaulay, Buchsbaum or $k$-decomposable, if and only if an arbitrary expansion of $\Delta$ has the same property. Moreover, some homological invariants like the regularity and the projective dimension of the Stanley-Reisner ideals of $\Delta$ and those of their expansions are compared.
\end{abstract}

\maketitle

\section*{Introduction}
Let $S=K[x_1,\ldots,x_n]$ be a polynomial ring over a field $K$.
Any squarefree monomial ideal $I$ in $S$ can be considered both as the Stanley-Reisner ideal of a simplicial complex $\Delta_I=\{\{x_1,\ldots,x_n\}:\ x_1\cdots x_n\notin I\}$  and as a facet ideal of the simplicial complex $\Delta'=\langle F:\ x^F \textrm{ is a minimal generator of } I\rangle$.
Each of these considerations make a natural one-to-one correspondence
between the class of squarefree
monomial ideals in $S$ and the class of simplicial complexes on $\{x_1,\ldots,x_n\}$. Thus simplicial complexes play an important role in the study monomial ideals. In this regard classifying simplicial complexes with a desired property or making modifications to a structure like a graph or a simplicial complex so that it satisfies a special property, has been considered in many research papers, see for example \cite{BaHe,ER,F,FH,FV,HMV,KhMo,Wo}.

The notion of expansion of a simplicial complex was defined in \cite{KhMo} as a natural generalization of the concept of expansion in graph theory and some properties of a simplicial complex and its expansions were related to each other.
Our goal in this paper is to investigate more relations between algebraic properties of the Stanley-Reisner ideal of a simplicial complex and those of its expansions, which generalize the results proved in \cite{KhMo}. It turns out that many algebraic and combinatorial properties of a simplicial complex and its expansions are equivalent and so this construction is a very good tool to make new simplicial complexes with a desired property.

The paper is organized as follows. In the first section, we review some preliminaries which are needed in the sequel.
In Section 2, we study the Stanley-Reisner ideal of an expanded complex. One of the main results is the following theorem.

\begin{Mainthm}(Theorem \ref{CM})
Let $\Del$ be a simplicial complex and $\alpha\in\NN^n$. Then $\Delta$ is Cohen-Macaulay if and only if $\Delta^{\alpha}$ is Cohen-Macaulay.
\end{Mainthm}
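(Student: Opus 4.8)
The plan is to combine Reisner's criterion with an inductive reformulation of Cohen--Macaulayness and a Mayer--Vietoris computation, after two preliminary reductions. First I would record how expansion interacts with links, dimension and purity. Writing $\pi(x_{ij})=x_i$ for the projection onto the vertices of $\Del$, every face $G$ of $\Del^\alpha$ has pairwise distinct first indices, so $\pi|_G$ is injective and $F:=\pi(G)$ is a face of $\Del$ with $|F|=|G|$; a direct check then gives $\lk_{\Del^\alpha}(G)=(\lk_\Del F)^{\alpha'}$, where $\alpha'$ is the restriction of $\alpha$ to the vertices outside $F$. Since the facets of $\Del^\alpha$ are exactly the transversals of the facets of $\Del$, this shows $\dim\Del^\alpha=\dim\Del$ and that $\Del^\alpha$ is pure iff $\Del$ is pure. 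Because an arbitrary expansion is an iterate of elementary ones (duplicating a single vertex $v$ into $v_1,v_2$, i.e. $\alpha=(1,\dots,2,\dots,1)$), it suffices to treat an elementary expansion $\Del'$, which I would do by induction on the number of vertices of $\Del$.

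Next I would invoke the standard inductive form of Reisner's criterion: $\Gamma$ is Cohen--Macaulay over $K$ iff $\Gamma$ is pure, $\widetilde H_i(\Gamma;K)=0$ for all $i<\dim\Gamma$, and $\lk_\Gamma(w)$ is Cohen--Macaulay for every vertex $w$. Applying this to $\Del'$, the link formula gives $\lk_{\Del'}(v_1)=\lk_{\Del'}(v_2)=\lk_\Del(v)$ and $\lk_{\Del'}(x_j)=(\lk_\Del(x_j))'$ for the remaining vertices $x_j$. As each $\lk_\Del(x_j)$ has fewer vertices than $\Del$, the induction hypothesis yields $\lk_{\Del'}(x_j)$ Cohen--Macaulay iff $\lk_\Del(x_j)$ is, while $\lk_{\Del'}(v_i)=\lk_\Del(v)$ matches on the nose. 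Hence the vertex-links of $\Del'$ are all Cohen--Macaulay iff those of $\Del$ are, and purity transfers by the first paragraph. Consequently the whole equivalence reduces to one homological statement: \emph{assuming} $\Del$ is pure with all vertex-links Cohen--Macaulay, $\widetilde H_i(\Del';K)=0$ for $i<\dim\Del$ iff $\widetilde H_i(\Del;K)=0$ for $i<\dim\Del$ (if the standing hypotheses fail, both complexes are non-Cohen--Macaulay and there is nothing to prove).

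For this core I would realize $\Del'=\Del_1\cup\Del_2$ as two copies of $\Del$ (with $v$ renamed $v_1$, resp.\ $v_2$) glued along $D:=\mathrm{del}_\Del(v)=\Del_1\cap\Del_2$. The reduced Mayer--Vietoris sequence, after identifying $\widetilde H_i(\Del_1)\oplus\widetilde H_i(\Del_2)\cong\widetilde H_i(\Del)^{2}$ and the boundary map with $x\mapsto(\iota_*x,-\iota_*x)$ for $\iota\colon D\hookrightarrow\Del$, splits into short exact sequences yielding
\[
\dim_K\widetilde H_i(\Del')=2\dim_K\widetilde H_i(\Del)-\dim_K\mathrm{im}\,\iota_*^{(i)}+\dim_K\ker\iota_*^{(i-1)} .
\]
The converse direction is immediate from this formula alone: since $\dim\mathrm{im}\,\iota_*^{(i)}\le\dim\widetilde H_i(\Del)$ and $\dim\ker\iota_*^{(i-1)}\ge 0$, the right-hand side is $\ge\dim\widetilde H_i(\Del)$, so $\widetilde H_i(\Del')=0$ forces $\widetilde H_i(\Del)=0$. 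For the forward direction I would bring in the deletion--link long exact sequence relating $\widetilde H_*(L)$, $\widetilde H_*(D)$ and $\widetilde H_*(\Del)$ with $L:=\lk_\Del(v)$: the standing hypothesis makes $L$ Cohen--Macaulay of dimension $\dim\Del-1$, so $\widetilde H_i(L)=0$ for $i<\dim\Del-1$, which together with $\widetilde H_i(\Del)=0$ for $i<\dim\Del$ forces $\widetilde H_i(D)=0$ for $i<\dim\Del-1$; feeding these vanishings into the displayed rank formula kills both correction terms for every $i<\dim\Del$.

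The main obstacle is precisely this forward implication, where one must control the homology of the deletion $D=\mathrm{del}_\Del(v)$, which need not itself be Cohen--Macaulay: the decisive input is the Cohen--Macaulayness of the vertex-link $L=\lk_\Del(v)$, which is supplied exactly by the inductive rephrasing of Reisner's criterion and pins down $\widetilde H_i(D)$ in the relevant range. The rest is bookkeeping that I expect to be routine: verifying the link formula (including the degenerate cases $G=\emptyset$ and $v\notin V(\lk_\Del G_0)$), the preservation of purity and dimension, and the fact that the elementary steps compose to an arbitrary $\alpha$.
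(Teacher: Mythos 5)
Your proposal is correct, but it reaches the statement by a genuinely different route than the paper, most notably in the harder implication ($\Delta$ Cohen--Macaulay $\Rightarrow$ $\Delta'$ Cohen--Macaulay for a vertex duplication). The paper argues algebraically: after the same reduction to duplicating one vertex, it passes to Alexander duals, writes $I_{\Delta'^{\vee}}=x'_iI_{\Delta^{\vee}}+x_iI_{(\lk_{\Delta}(x_i))^{\vee}}$, invokes the Eagon--Reiner theorem (Cohen--Macaulayness is equivalent to the dual ideal having a linear resolution), and uses the Betti-splitting results of Francisco--H\`a--Van Tuyl to compute $\reg(I_{\Delta'^{\vee}})=n-d$, so that the dual again has a linear resolution. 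You instead stay entirely on the topological side: the inductive form of Reisner's criterion (purity, Cohen--Macaulay vertex links, vanishing of $\widetilde{H}_i$ below the dimension), an induction on the number of vertices to transfer the link conditions, and the Mayer--Vietoris rank formula for $\Delta'=\Delta_1\cup\Delta_2$ glued along the deletion $D=\Delta\setminus\{v\}$. Your identification of the crux is exactly right: Cohen--Macaulayness of $L=\lk_\Delta(v)$, pushed through the deletion--link exact sequence (the star of $v$ being a cone), forces $\widetilde{H}_i(D;K)=0$ for $i<\dim\Delta-1$, which kills both correction terms in the rank formula; note that it is the purity in your standing hypotheses that guarantees $\dim L=\dim\Delta-1$, so that Reisner gives vanishing in the needed range. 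For the easier implication the paper works with arbitrary $\alpha$ at once, via the surjection $\widetilde{H}_{i}(\Delta^{\alpha};K)\twoheadrightarrow\widetilde{H}_{i}(\Delta;K)$ of Proposition \ref{expansion}(iii) together with the link formula, while you obtain it from the same Mayer--Vietoris inequality and iterate; both are sound. As for what each approach buys: yours is elementary and self-contained (simplicial homology only, no Alexander duality and no splitting machinery) and even yields an explicit rank formula for $\widetilde{H}_i(\Delta';K)$, which could be of independent interest; the paper's proof produces the displayed decomposition of $I_{\Delta'^{\vee}}$, which it reuses verbatim in the proof of Theorem \ref{decom} on $k$-decomposability, and which ties directly into the regularity and linear-resolution results of its Section 3.
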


As a corollary, it is shown that sequentially Cohen-Macaulayness  of a simplicial complex and its expansion are also equivalent.

Moreover, using an epimorphism which relates the reduced simplicial homology groups,  we show that an expansion of $\Delta$ is Buchsbaum if and only if $\Delta$ has the same property (see Theorem \ref{Buchs}). Theorem \ref{decom} (resp. Corollary \ref{shell}) shows that $\Delta$ is  $k$-decomposable (resp. shellable) if and only if an arbitrary expansion of $\Delta$ is $k$-decomposable (resp. shellable). Section 3 is devoted to studying some homological invariants of the Stanley-Reisner ideals of $\Delta$ and $\Del^\alpha$. In fact, we give inequalities which relate the regularity, the projective dimension and the depth of a simplicial complex to those of its expansion.

\section{Preliminaries}

Throughout this paper, we assume that $\Delta$ is a simplicial complex on the vertex set $X=\{x_1, \dots, x_n\}$,
$K$ is a field and  $S=K[X]$ is a polynomial ring.
The set of facets (maximal faces) of $\Delta$
is denoted by $\mathcal{F}(\Delta)$ and if $\mathcal{F}(\Delta)=\{F_1,\ldots,F_r\}$, we write $\Delta=\langle F_1,\ldots,F_r\rangle$. For a monomial ideal $I$ of $S$, the set of minimal generators of $I$ is denoted by $\mathcal{G}(I)$.
For $\alpha=(s_1,\ldots,s_n)\in\NN^n$, we set $X^{\alpha}=\{x_{11},\ldots,x_{1s_1},\ldots,x_{n1},\ldots,x_{ns_n}\}$ and $S^\alpha=K[X^{\alpha}].$

The concept of expansion of a simplicial complex was defined in \cite{KhMo} as follows.

\begin{defn}
Let $\Del$ be a simplicial complex on $X$, $\alpha=(s_1,\ldots,s_n)\in\NN^n$ and $F=\{x_{i_1},\ldots ,x_{i_r}\}$  be a facet of $\Del$. The \textbf{expansion} of the simplex $\langle F\rangle$ with respect to $\alpha$ is denoted by $\langle F\rangle^\alpha$ and is defined as a simplicial complex on the vertex set $\{x_{i_lt_l}:1\leq l\leq r,\ 1\leq t_l\leq s_{i_l}\}$ with facets $$\{\{x_{i_1j_1},\ldots, x_{i_rj_r}\}: 1\leq j_m\leq s_{i_m}\}.$$
The expansion of $\Del$ with respect to $\alpha$ is defined as $$\Del^\alpha=\bigcup_{F\in\Del}\langle F\rangle^\alpha.$$
A simplicial complex obtained by an expansion, is called an expanded complex.
\end{defn}

\begin{defn}
{\rm A simplicial complex $\Delta$ is called \textbf{shellable} if there exists an ordering $F_1<\cdots<F_m$ on the
facets of $\Delta$
such that for any $i<j$, there exists a vertex
$v\in F_j\setminus F_i$ and  $\ell<j$ with
$F_j\setminus F_\ell=\{v\}$. We call $F_1,\ldots,F_m$ a \textbf{shelling} for
$\Delta$.}
\end{defn}

For a simplicial complex $\Delta$ and $F\in \Delta$, the link of $F$ in
$\Delta$ is defined as $$\lk_{\Delta}(F)=\{G\in \Delta: G\cap
F=\emptyset, G\cup F\in \Delta\},$$ and the deletion of $F$ is the
simplicial complex $$\Delta \setminus F=\{G\in \Delta: F \nsubseteq G\}.$$

Woodroofe in \cite{Wo} extended the definition of $k$-decomposability
to non-pure complexes as follows.

Let $\Delta$ be a simplicial complex on vertex set $X$. Then a face $\sigma$ is called a
\textbf{shedding face} if every face $\tau$  containing $\sigma$ satisfies the following exchange property: for every
$v \in \sigma$ there is $w\in X \setminus \tau$ such that $(\tau \cup \{w\})\setminus \{v\}$ is a face of $\Delta$.

\begin{defn}\cite[Definition 3.5]{Wo}
{\rm A simplicial complex $\Delta$ is recursively defined to be \textbf{$k$-decomposable} if
either $\Delta$ is a simplex or else has a shedding face $\sigma$ with $\dim(\sigma)\leq k$ such that both $\Delta \setminus \sigma$
and $\lk_{\Delta}(\sigma)$ are $k$-decomposable. The complexes $\{\}$ and $\{\emptyset\}$  are considered to be
$k$-decomposable for all $k \geq -1$.}
\end{defn}

Note that $0$-decomposable simplicial complexes are precisely vertex decomposable simplicial complexes.

Also the notion of a decomposable monomial ideal was introduced in \cite{RaYa} as follows.
For the monomial $u=x_1^{a_1}\cdots x_n^{a_n}$ in $S$, the support of $u$  denoted by $\supp(u)$ is the set $\{x_i:\ a_i\neq 0\}$. For a monomial $M$ in $S$,
set $[u,M] = 1$ if for all $x_i\in\supp(u)$, $x_i^{a_i}\nmid M$. Otherwise set $[u,M]\neq 1$.

For the monomial $u$ and the monomial ideal $I$, set
$$I^u = (M\in \mathcal{G}(I) :\ [u,M]\neq 1)$$ and $$I_u = (M\in \mathcal{G}(I) :\ [u,M]=1).$$
 For a monomial ideal $I$  with $\mathcal{G}(I)=\{M_1,\ldots,M_r\}$, the monomial $u=x_1^{a_1}
\cdots x_n^{a_n}$ is called  a \textbf{shedding monomial} for $I$ if $I_u\neq 0$ and for each $M_i\in \mathcal{G}(I_u)$ and each
$x_l\in \supp(u)$ there exists $M_j\in \mathcal{G}(I^u)$ such that $M_j:M_i=x_l$.

\begin{defn}\cite[Definition 2.3]{RaYa}
{\rm
A monomial ideal $I$ with $\mathcal{G}(I)=\{M_1,\ldots,M_r\}$ is called  \textbf{$k$-decomposable} if $r=1$ or else has a shedding monomial $u$ with
$|\supp(u)|\leq k+1$ such that the ideals $I_u$ and $I^u$ are $k$-decomposable.}
\end{defn}

\begin{defn}\label{1.2}
{\rm
A monomial ideal $I$ in the ring $S$ has \textbf{linear quotients} if there exists an ordering $f_1, \dots, f_m$ on the minimal generators of $I$ such that the colon ideal $(f_1,\ldots,f_{i-1}):(f_i)$ is generated by a subset of $\{x_1,\ldots,x_n\}$ for all $2\leq i\leq m$. We show this ordering by $f_1<\dots <f_m$ and we call it an order of linear quotients on $\mathcal{G}(I)$.

Let $I$ be a monomial ideal which has linear quotients and $f_1<\dots <f_m$ be an order of linear quotients on the minimal generators of $I$. For any $1\leq i\leq m$, $\set_I(f_i)$ is defined as
$$\set_I(f_i)=\{x_k:\ x_k\in (f_1,\ldots, f_{i-1}) : (f_i)\}.$$
}
\end{defn}

\begin{defn}
{\rm A graded $S$-module $M$ is called
\textbf{sequentially Cohen--Macaulay} (over a field $K$) if there exists a
finite filtration of graded $S$-modules $$0=M_0\subset M_1\subset
\cdots \subset M_r=M$$ such that each $M_i/M_{i-1}$ is
Cohen--Macaulay and
$$\dim(M_1/M_0)<\dim(M_2/M_1)<\cdots<\dim(M_r/M_{r-1}).$$}
\end{defn}

For a $\mathbb{Z}$-graded $S$-module $M$, the \textbf{Castelnuovo-Mumford regularity} (or briefly regularity)
of $M$ is defined as
$$\reg(M) = \max\{j-i: \ \beta_{i,j}(M)\neq 0\},$$
and the \textbf{projective dimension} of $M$ is defined as
$$\pd(M) = \max\{i:\ \beta_{i,j}(M)\neq 0 \ \text{for some}\ j\},$$
where $\beta_{i,j}(M)$ is the $(i,j)$th graded Betti number of $M$.

For a simplicial complex $\Delta$ with the vertex set $X$, the \textbf{Alexander dual simplicial complex} associated to $\Delta$ is defined as
$$\Delta^{\vee}=:\{X\setminus F:\ F\notin \Delta\}.$$

For a squarefree monomial ideal $I=( x_{11}\cdots
x_{1n_1},\ldots,x_{t1}\cdots x_{tn_t})$, the \textbf{Alexander dual ideal} of $I$, denoted by
$I^{\vee}$, is defined as
$$I^{\vee}:=(x_{11},\ldots, x_{1n_1})\cap \cdots \cap (x_{t1},\ldots, x_{tn_t}).$$
For a subset $C\subseteq X$, by $x^C$ we mean the monomial $\prod_{x\in C} x$.
One can see that
$$(I_{\Delta})^{\vee}=(x^{F^c} \ : \ F\in \mathcal{F}(\Delta)), $$
where $I_{\Delta}$ is the Stanley-Reisner ideal associated to $\Delta$ and $F^c=X\setminus F$.
Moreover, $(I_{\Delta})^{\vee}=I_{\Delta^{\vee}}$.

A simplicial complex $\Delta$ is called Cohen-Macaulay (resp. sequentially Cohen-Macaulay, Buchsbaum and Gorenstein), if its the Stanley Reisner ring $K[\Delta]=S/I_{\Delta}$ is Cohen-Macaulay (resp. sequentially Cohen-Macaulay, Buchsbaum and Gorenstein).

For a simplicial complex $\Delta$, the facet ideal of $\Delta$ is defined as $I(\Delta)=(x^F:\ F\in \mathcal{F}(\Delta))$.
Also the complement of $\Delta$ is the simplicial complex $\Delta^c=\langle F^c:\ F\in \mathcal{F}(\Delta)\rangle$. In fact $I(\Delta^c)=I_{\Delta^{\vee}}$.

\section{Algebraic properties of an expanded complex}

In this section, for a simplicial complex $\Delta$, we study the Stanley-Reisner ideal $I_{\Delta^{\alpha}}$ to see how its algebraic properties are related to those of $I_{\Delta}$.

\begin{prop}\label{expansion}
Let $\Del$ be a simplicial complex on $X$ and let $\alpha\in\NN^n$.
\begin{enumerate}[\upshape (i)]
  \item $\dim(\Del^\alpha)=\dim(\Del)$ and $\Del$ is pure if and only if $\Del^\alpha$ is pure;
  \item For $F\in\Del$ and for every facet $G\in\langle F\rangle^\alpha$, we have
$$\lk_{\Del^\alpha}(G)=(\lk_\Del (F))^\alpha;$$
  \item For all $i\leq\dim(\Del)$, there exists an epimorphism $\theta:\tilde{H}_{i}(\Del^\alpha;K)\rightarrow\tilde{H}_{i}(\Del;K)$ and so
  $$\frac{\tilde{H}_{i}(\Del^\alpha;K)}{\ker(\theta)}\cong\tilde{H}_{i}(\Del;K).$$
\end{enumerate}
\end{prop}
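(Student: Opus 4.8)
The plan is to make the vertex projection $\pi\colon X^{\alpha}\to X$, $\pi(x_{ij})=x_i$, the organizing tool for all three parts. The step I would set up first is a clean description of the faces of $\Del^\alpha$: a subset $H\subseteq X^\alpha$ lies in $\Del^\alpha$ if and only if $\pi$ restricts to an injection on $H$ (i.e.\ $H$ contains at most one copy of each original vertex) and $\pi(H)\in\Del$. This is immediate from the definition: the facets of $\Del^\alpha$ are exactly the transversals $\{x_{i_1j_1},\ldots,x_{i_rj_r}\}$ of facets $F=\{x_{i_1},\ldots,x_{i_r}\}\in\Del$; any $H$ meeting the two conditions picks one copy per vertex of $\pi(H)$ and so extends to such a transversal, and conversely every face is a subset of a facet.

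For (i), I would first verify that each facet $G$ of $\langle F\rangle^\alpha$ is genuinely a facet of $\Del^\alpha$: if such a transversal $G$ of a facet $F$ were contained in a transversal $G'$ of a facet $F'$, then $F=\pi(G)\subseteq\pi(G')=F'$ forces $F=F'$ by maximality, whence $|G|=|G'|$ gives $G=G'$. Since $\pi$ is a bijection on each facet, $|G|=|F|$ for every facet $G$ of $\langle F\rangle^\alpha$, so the multiset of facet-dimensions of $\Del^\alpha$ coincides with that of $\Del$. Reading this equality in both directions yields $\dim(\Del^\alpha)=\dim(\Del)$ and the equivalence of purity.

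For (ii), fix $F\in\Del$ and a facet $G$ of $\langle F\rangle^\alpha$, so $\pi(G)=F$, and compute both sides through the face characterization. Using that $H\cup G\in\Del^\alpha$ requires $\pi|_{H\cup G}$ injective, I would show that $H\cap G=\emptyset$ together with this injectivity is equivalent to $\pi(H)\cap F=\emptyset$; combined with $\pi(H\cup G)=\pi(H)\cup F\in\Del$ this says precisely $\pi(H)\in\lk_\Del(F)$. Hence $H\in\lk_{\Del^\alpha}(G)$ iff $\pi|_H$ is injective and $\pi(H)\in\lk_\Del(F)$, which is exactly the face characterization of $(\lk_\Del(F))^\alpha$. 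The one point to watch is that the disjointness $H\cap G=\emptyset$ is automatically absorbed into $\pi(H)\cap F=\emptyset$, since $G\subseteq\pi^{-1}(F)$.

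For (iii), the idea is to realize $\pi$ as a simplicial map and split it. By the face characterization $\pi$ sends faces of $\Del^\alpha$ to faces of $\Del$ and is injective on each face, so it is a simplicial map and induces $\theta:=\pi_{*}\colon\tilde H_i(\Del^\alpha;K)\to\tilde H_i(\Del;K)$. To get surjectivity I would produce a section: the vertex assignment $x_i\mapsto x_{i1}$ defines a simplicial inclusion $\iota\colon\Del\hookrightarrow\Del^\alpha$ (a facet $F$ maps to one of its transversals), and $\pi\circ\iota=\mathrm{id}_\Del$ as simplicial maps, so $\theta\circ\iota_{*}=\mathrm{id}$ on $\tilde H_i(\Del;K)$. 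Thus $\theta$ is a split epimorphism, and the first isomorphism theorem gives $\tilde H_i(\Del^\alpha;K)/\ker(\theta)\cong\tilde H_i(\Del;K)$; for $i>\dim(\Del)$ both groups vanish by (i), which is why the stated range is $i\le\dim(\Del)$. I expect the main obstacle to be spotting the retraction $\iota$: once the section is in hand surjectivity is formal, so the genuine work lies in the face-bookkeeping underlying the characterization and the link computation in (ii).
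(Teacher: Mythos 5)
Your proposal is correct and takes essentially the same route as the paper: the paper proves only part (iii), using the same projection-induced chain map (its $\varphi$ is your $\pi$), and dismisses (i) and (ii) as ``easily verified,'' so your face characterization via injectivity of $\pi|_H$ simply supplies the bookkeeping the paper omits. Your explicit section $\iota\colon x_i\mapsto x_{i1}$ with $\pi\circ\iota=\mathrm{id}_\Del$ is a slight tightening of the paper's surjectivity step, which only exhibits preimages of single oriented simplices $[x_{i_0},\ldots,x_{i_q}]$ --- informal as written, since homology classes are represented by cycles rather than individual simplices --- but it is the same underlying idea.
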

\begin{proof}
(i) and (ii) are easily verified.

(iii) Let the map $\varphi:\Del^\alpha\rightarrow\Del$ be defined by $\varphi(\{x_{i_1j_1},\ldots,x_{i_qj_q}\})=\{x_{i_1},\ldots,x_{i_q}\}$. For each $q$, let $\varphi_\#:\tilde{\C}_q(\Del^\alpha;K)\rightarrow\tilde{\C}_q(\Del;K)$ be a homomorphism  defined on the basis elements as follows.
$$\varphi_\#([x_{i_0j_0},\ldots,x_{i_qj_q}])=\left[\varphi(\{x_{i_0j_0}\}),\ldots,\varphi(\{x_{i_qj_q}\})\right].$$
It is clear from the definitions of $\tilde{\C}_q(\Del^\alpha;K)$ and $\tilde{\C}_q(\Del;K)$ that $\varphi_\#$ is well-defined. Also, define $\varphi^\alpha_q:\tilde{H}_{q}(\Del^\alpha;K)\rightarrow\tilde{H}_{q}(\Del;K)$ by
$$\varphi^{\alpha}_q(z+B_q(\Del^\alpha))=\varphi_\#(z)+B_q(\Del).$$

Consider the diagram
\begin{tabbing}
\hskip 50mm$\tilde{\C}_{q+1}(\Del^\alpha;K)$ $\stackrel{\partial^{\alpha}_{q+1}}\longrightarrow$ $\tilde{\C}_q(\Del^\alpha;K)$\\
\hskip 50mm\ \ \ \ \ \ \ \ $\downarrow$ \hskip 21mm  $\downarrow$\\
\hskip 50mm$\tilde{\C}_{q+1}(\Del;K)$\ \  $\stackrel{\partial_{q+1}}\longrightarrow$\ \  $\tilde{\C}_q(\Del;K)$
\end{tabbing}
where $\partial^{\alpha}_{q+1}$ and $\partial_{q+1}$ are the homomorphism of the chain complexes of $\Delta^{\alpha}$ and $\Delta$, respectively and the vertical homomorphisms are $\varphi^{\alpha}_{q+1}$ and $\varphi^{\alpha}_q$.
One can see that for any basis element $\sigma=[x_{i_0j_0},\ldots,x_{i_{q+1}j_{q+1}}]\in \tilde{\C}_{q+1}(\Del^\alpha;K)$,
$\varphi^{\alpha}_q \partial^{\alpha}_{q+1}(\sigma)=\partial_{q+1}\varphi^{\alpha}_{q+1}(\sigma)$.
So the diagram commutes and then $$\varphi^{\alpha}_q(B_q(\Del^\alpha))\subseteq B_q(\Del).$$
Therefore $\varphi^{\alpha}_q$ is a well-defined homomorphism.

It is easy to see that $\varphi^{\alpha}_q$ is surjective, because for $[x_{i_0},\ldots,x_{i_q}]+B_q(\Del)\in \tilde{H}_q(\Del;K)$ we have $$\varphi^{\alpha}_q([x_{i_01},\ldots,x_{i_q1}]+B_q(\Del^\alpha))=[x_{i_0},\ldots,x_{i_q}]+B_q(\Del).$$
\end{proof}

Reisner gave a criterion for the Cohen-Macaulayness of a simplicial complex as follows.
\begin{thm}\cite[Theorem 5.3.5]{villarreal}\label{Reis}
Let $\Delta$ be a simplicial complex. If $K$ is a field,
then the following conditions are equivalent:
\begin{enumerate}[\upshape (i)]
  \item $\Delta$ is Cohen-Macaulay over $K$;
  \item $\tilde{H}_{i}(\lk_{\Delta}(F);K)=0$ for $F\in \Delta$ and $i<\dim(\lk_{\Delta}(F))$.
\end{enumerate}
\end{thm}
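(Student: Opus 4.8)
The plan is to reduce Cohen--Macaulayness to the vanishing of local cohomology and then to translate that vanishing into the topology of links via Hochster's formula, the combinatorial bound on link dimensions being supplied by a separate purity argument. Write $d=\dim K[\Delta]=\dim(\Delta)+1$ and let $\fm=(x_1,\dots,x_n)$ be the graded maximal ideal of $S$. By Grothendieck's vanishing and non-vanishing theorems, $K[\Delta]$ is Cohen--Macaulay if and only if $H^i_{\fm}(K[\Delta])=0$ for every $i<d$, the top module $H^d_{\fm}(K[\Delta])$ being automatically nonzero. I would show that this is equivalent to the purely combinatorial condition
\begin{equation}
\tilde{H}^{\,j}(\lk_{\Delta}(F);K)=0 \quad\text{for all } F\in\Delta \text{ and all } j<d-|F|-1, \tag{$\star$}
\end{equation}
and finally that $(\star)$ is equivalent to condition (ii).

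To obtain $(\star)$ I would compute $H^\bullet_{\fm}(K[\Delta])$ from the \v{C}ech complex of $K[\Delta]$ on $x_1,\dots,x_n$. This complex is $\ZZ^n$-graded, hence splits as a direct sum over multidegrees $a\in\ZZ^n$, and in each degree it becomes a finite complex of $K$-vector spaces. A localization $K[\Delta]_{x_\sigma}$ is nonzero only when $\sigma$ is a face of $\Delta$, and tracking in which degrees its monomials survive shows that the degree-$a$ strand vanishes unless $a$ has all coordinates nonpositive; for such $a$, setting $F=\Supp(a)\in\Delta$, the strand is isomorphic, after the shift $i\mapsto i-|F|-1$, to the reduced cochain complex of $\lk_{\Delta}(F)$. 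This is Hochster's formula,
\[
H^i_{\fm}(K[\Delta])_a\cong \tilde{H}^{\,i-|F|-1}(\lk_{\Delta}(F);K),\qquad F=\Supp(a),
\]
valid for $a\le 0$, the remaining graded pieces being zero. Since every face $F$ occurs as $\Supp(a)$ for a suitable $a\le 0$ and the right-hand side depends only on $F$, vanishing of $H^i_{\fm}(K[\Delta])$ for all $i<d$ is exactly condition $(\star)$ (put $j=i-|F|-1$). Over a field reduced cohomology is dual to reduced homology, so $(\star)$ may be read with $\tilde{H}_j$ in place of $\tilde{H}^{\,j}$.

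It remains to compare $(\star)$ with (ii). A maximal face of $\lk_{\Delta}(F)$ has the form $H\setminus F$ for a facet $H\supseteq F$, whence $\dim(\lk_{\Delta}(F))=(\max_{H\supseteq F}\dim H)-\dim F-1\le d-|F|-1$. Thus the range $j<\dim(\lk_{\Delta}(F))$ in (ii) is contained in the range $j<d-|F|-1$ of $(\star)$, and $(\star)\Rightarrow$ (ii) is immediate. For the converse I would prove combinatorially that (ii) forces $\Delta$ to be pure: condition (ii) is inherited by every link, since $\lk_{\lk_{\Delta}(F)}(G)=\lk_{\Delta}(F\cup G)$, and one argues by induction on $\dim(\Delta)$. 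For $\dim(\Delta)\ge 1$, taking $F=\emptyset$ in (ii) gives $\tilde{H}_0(\Delta;K)=0$, so $\Delta$ is connected; by induction each vertex link $\lk_{\Delta}(v)$ is pure, so all facets through a fixed vertex are equidimensional, and connectedness propagates this equality along edges until all facets of $\Delta$ have dimension $d-1$. Once $\Delta$ is pure one has $\dim(\lk_{\Delta}(F))=d-|F|-1$ for every $F$, the two ranges coincide, and (ii)$\Rightarrow(\star)$ follows. Combining, $K[\Delta]$ is Cohen--Macaulay $\iff(\star)\iff$ (ii).

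The main obstacle is the degree-by-degree identification in the second paragraph, that is, the proof of Hochster's formula: one must verify exactly when the localized monomials survive in a given multidegree and check that the resulting maps agree, up to the shift $i\mapsto i-|F|-1$ and sign, with the simplicial coboundary of the link. The inductive purity lemma is elementary by comparison, the only point requiring care being the base of the induction and the propagation of the common facet dimension across the connected complex.
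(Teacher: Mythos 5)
This theorem appears in the paper only as a quoted result (Reisner's criterion, cited from \cite{villarreal}) with no proof supplied, so there is no internal proof to compare against; your sketch correctly reconstructs the standard textbook argument that the cited sources use, namely Grothendieck vanishing and nonvanishing to reduce Cohen--Macaulayness to $H^i_{\fm}(K[\Delta])=0$ for $i<d$, the multigraded \v{C}ech computation yielding Hochster's formula $H^i_{\fm}(K[\Delta])_a\cong \tilde{H}^{i-|F|-1}(\lk_{\Delta}(F);K)$ for $a\leq 0$ with $F=\mathrm{Supp}(a)$, and the inductive purity lemma reconciling the range $j<d-|F|-1$ with $j<\dim(\lk_{\Delta}(F))$, the last step being exactly the point most expositions gloss over and which you handle correctly via $\lk_{\lk_{\Delta}(F)}(G)=\lk_{\Delta}(F\cup G)$ and connectedness. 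The only wording to repair is in the Hochster step: when $a$ has a positive coordinate the degree-$a$ strand of the \v{C}ech complex need not vanish as a complex of vector spaces (it is merely acyclic, by the standard cone argument inverting the variable with positive exponent), but since you explicitly flag this degree-by-degree verification as the remaining technical work, the outline stands as a correct proof plan.
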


For a simplicial complex $\Delta$ with the vertex set $X$ and $x_i\in X$, an expansion of $\Delta$ obtained by duplicating $x_i$
is the simplicial complex with the vertex set $X'=X\cup\{x'_i\}$, where $x'_i$ is a new vertex, defined as follows
 $$\Delta'=\Delta\cup\langle (F\setminus\{x_i\})\cup\{x'_i\}:\ F\in \mathcal{F}(\Delta), x_i\in F\rangle.$$
 In fact $\Delta'=\Delta^{(s_1,\ldots,s_n)}$, where
$s_j=\left\{
\begin{array}{ll}
1  & \hbox{if}\ j\neq i \\
 2 & \hbox{if}\ j=i.
\end{array}
\right.$

\begin{rem}\label{rem1}
Let $\mathcal{L}$ be a property such that for any simplicial complex $\Delta$ with the property $\mathcal{L}$, any expansion of $\Delta$ obtained by duplicating a vertex of $\Delta$ has the property $\mathcal{L}$. Then by induction for a simplicial complex $\Delta$ with the property $\mathcal{L}$ any expansion of $\Delta$ has the property $\mathcal{L}$, since for any $(s_1,\ldots,s_n)\in \NN^n$, $\Delta^{(s_1,\ldots,s_n)}$ is the expansion of $\Delta^{(s_1,\ldots,s_{i-1},s_i-1,s_{i+1},\ldots,s_n)}$ by duplicating $x_{i1}$.
\end{rem}

Now, we come to one of the main results of this paper.

\begin{thm}\label{CM}
Let $\Del$ be a simplicial complex and $\alpha\in\NN^n$. Then $\Delta$ is Cohen-Macaulay if and only if $\Delta^{\alpha}$ is Cohen-Macaulay.
\end{thm}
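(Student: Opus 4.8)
The plan is to reduce the equivalence to Reisner's criterion (Theorem \ref{Reis}) and exploit the structure of links under expansion recorded in Proposition \ref{expansion}. By Remark \ref{rem1}, it suffices to prove the statement when $\Delta^\alpha=\Delta'$ is obtained from $\Delta$ by duplicating a single vertex $x_i$; the general case follows by induction. So throughout I fix such a single-vertex expansion.

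\medskip

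The key combinatorial input is part (ii) of Proposition \ref{expansion}: for every face $F\in\Delta$ and every facet $G$ of $\langle F\rangle^\alpha$ lying above $F$, we have $\lk_{\Delta^\alpha}(G)=(\lk_\Delta(F))^\alpha$. This says that links in the expansion are themselves expansions of links in $\Delta$, which is exactly the recursive structure Reisner's criterion needs. However, Reisner's condition quantifies over \emph{all} faces $G\in\Delta^\alpha$, not only facets of the subcomplexes $\langle F\rangle^\alpha$, so the first step is to understand $\lk_{\Delta^\alpha}(G)$ for an arbitrary face $G\in\Delta^\alpha$. I would argue as follows: write $G=\{x_{i_1j_1},\ldots,x_{i_qj_q}\}$ with distinct first indices $i_1,\ldots,i_q$ (distinctness holds because $G$ is a face of some $\langle F\rangle^\alpha$, and within a single expanded simplex one never mixes two copies of the same vertex). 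Then $\varphi(G)=\{x_{i_1},\ldots,x_{i_q}\}=:F\in\Delta$, and a direct check from the definition of the expansion shows $\lk_{\Delta^\alpha}(G)=(\lk_\Delta(F))^{\alpha'}$ for the appropriate restriction $\alpha'$ of $\alpha$ to the remaining vertices. The crucial point is that the duplication indices $j_1,\ldots,j_q$ already chosen do not affect the link, since any facet of $\Delta^\alpha$ containing $G$ is an expansion of a facet of $\Delta$ containing $F$.

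\medskip

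Granting this, the equivalence is a clean translation. Suppose first that $\Delta$ is Cohen-Macaulay. Fix $G\in\Delta^\alpha$ and set $F=\varphi(G)$; by the link computation above, $\lk_{\Delta^\alpha}(G)$ is an expansion of $\lk_\Delta(F)$. By part (i) of Proposition \ref{expansion} these two complexes have the same dimension, and by the induction hypothesis (within the recursive structure of Reisner's criterion) it suffices to know that reduced homology vanishes in the right range. Here I invoke part (iii): for each $i<\dim(\lk_\Delta(F))=\dim(\lk_{\Delta^\alpha}(G))$ there is an \emph{epimorphism} $\tilde H_i(\lk_{\Delta^\alpha}(G);K)\twoheadrightarrow\tilde H_i(\lk_\Delta(F);K)$, and Reisner's criterion applied to $\Delta$ gives $\tilde H_i(\lk_\Delta(F);K)=0$ in that range. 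This kills the target but \emph{not} the source, so the epimorphism alone does not yield vanishing in the forward direction; this is the genuine obstacle, and I expect it to be the heart of the argument.

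\medskip

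To overcome it I would not rely on part (iii) alone but would compute $\tilde H_i(\Delta^\alpha;K)$ directly in terms of $\tilde H_i(\Delta;K)$, showing in fact that the two are \emph{isomorphic} (not merely related by a surjection) whenever one restricts to degrees below the top dimension, or more precisely that the reduced homology of an expansion of a complex agrees with that of the complex in the relevant range. The cleanest route is to exhibit a deformation-retraction-type chain homotopy: the map $\varphi_\#$ of the proof of Proposition \ref{expansion}(iii) admits a section $s$ (send $x_{i}$ to $x_{i1}$), and $\varphi_\#\circ s=\mathrm{id}$ on the chain level while $s\circ\varphi_\#$ is chain-homotopic to the identity because collapsing the duplicated vertices $x_{i2},\ldots,x_{is_i}$ onto $x_{i1}$ is a sequence of elementary collapses that do not change homotopy type. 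This upgrades the epimorphism of (iii) to an isomorphism and makes the Reisner conditions for $\Delta$ and $\Delta^\alpha$ literally equivalent, giving both directions at once. The converse direction (if $\Delta^\alpha$ is Cohen-Macaulay then so is $\Delta$) is then immediate from the same isomorphism together with the observation that every link in $\Delta$ arises, up to this homology identification, as $\varphi$ of a link in $\Delta^\alpha$. The main technical care will be in pinning down the homotopy that collapses duplicated vertices and in verifying the link formula for arbitrary (non-facet) faces $G$.
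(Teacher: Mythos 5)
Your reduction to a single vertex duplication via Remark \ref{rem1} and your treatment of the ``if'' direction (the epimorphism of Proposition \ref{expansion}(iii) applied to links, combined with Reisner's criterion) are exactly what the paper does, and that half is fine. The gap is in the ``only if'' direction, and it is fatal to the route you chose: the map collapsing the duplicated vertices $x_{i2},\ldots,x_{is_i}$ onto $x_{i1}$ is \emph{not} a homotopy equivalence, and $\tilde{H}_j(\Del^\alpha;K)\cong\tilde{H}_j(\Del;K)$ fails in general. Take $\Del=\langle x_1x_2,x_1x_3,x_2x_3\rangle$ (the boundary of a triangle, a circle) and duplicate $x_1$: the result is the graph with edges $x_1x_2,x_1x_3,x_2x_3,x'_1x_2,x'_1x_3$, which has first Betti number $2$, not $1$; it is homotopy equivalent to a wedge of two circles. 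So there is no sequence of elementary collapses realizing $\varphi$, and $s\circ\varphi_\#$ is not chain homotopic to the identity (indeed $\Del$ is only a retract of $\Del^\alpha$, not a deformation retract). The hedged version ``isomorphism below the top dimension'' also fails in general: for $\Del=\langle x_1x_2,x_3\rangle$, duplicating $x_3$ changes $\tilde{H}_0$ even though $0<\dim\Del$. What is true (and what the theorem ultimately guarantees) is that for a Cohen--Macaulay $\Del$ the extra homology created by expansion sits only in top degree, but that is essentially the content of the forward direction and cannot be assumed.

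The paper proves the forward direction by a completely different, algebraic argument that you would need to replace your homotopy claim with: passing to the Alexander dual, one computes $I_{\Delta'^{\vee}}=x'_iI_{\Delta^{\vee}}+x_iI_{(\lk_{\Delta}(x_i))^{\vee}}$, shows (using that $\Del$ and hence $\lk_\Del(x_i)$ are Cohen--Macaulay, so both dual ideals have linear resolutions by the Eagon--Reiner theorem) that this decomposition is a Betti splitting, computes $\reg(I_{\Delta'^{\vee}})=n-d$ from the splitting formula together with the identity $x'_iI_{\Delta^{\vee}}\cap x_iI_{(\lk_{\Delta}(x_i))^{\vee}}=x'_ix_iI_{(\lk_{\Delta}(x_i))^{\vee}}$, and concludes that $I_{\Delta'^{\vee}}$ has a linear resolution, hence $\Delta'$ is Cohen--Macaulay by Eagon--Reiner again. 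Some such mechanism that sees more than the homology of $\Del$ alone is genuinely needed; your proposal as written does not supply one.
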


\begin{proof}
The 'if' part follows from Proposition \ref{expansion} and Theorem \ref{Reis}.
To prove the converse, let $\Delta$ be Cohen-Macaulay. In the light of Remark \ref{rem1}, it is enough to show that any expansion of $\Delta$ obtained by duplication an arbitrary vertex, is Cohen-Macaulay. Let $X$ be the vertex set of $\Delta$, $x_i\in X$ and $\Delta'$ be an expansion of $\Delta$ by duplicating $x_i$. Then $\Delta'=\Delta \cup \langle  (F\setminus\{x_i\})\cup\{x'_i\}:\ F\in \mathcal{F}(\Delta), x_i\in F\rangle$ and
\begin{multline}
  I_{\Delta'^{\vee}}=(x^{(X\cup\{x'_i\})\setminus F}:\ F\in \mathcal{F}(\Delta'))=(x'_ix^{X\setminus F}:\ F\in \mathcal{F}(\Delta))+(x^{(X\cup\{x'_i\})\setminus (F\cup\{x'_i\}\setminus \{x_i\})}: \\
  \ F\in \mathcal{F}(\Delta), x_i\in F)=
(x'_ix^{X\setminus F}:\ F\in \mathcal{F}(\Delta))+(x_ix^{X\setminus F}:\ F\in \mathcal{F}(\Delta), x_i\in F).
\end{multline}

Thus $$I_{\Delta'^{\vee}}=x'_iI_{\Delta^{\vee}}+x_iI_{(\lk_{\Delta}(x_i))^{\vee}}.$$
Since $\Delta$ is Cohen-Macaulay, by \cite[Proposition 5.3.8]{villarreal}, $\lk_{\Delta}(x_i)$ is also Cohen-Macaulay.
So \cite[Theorem 3]{ER} implies that $I_{\Delta^{\vee}}$ and $I_{(\lk_{\Delta}(x_i))^{\vee}}$ have  $(n-d-1)$-linear resolutions, where $d=\dim(\Delta)$.
Therefore $x'_iI_{\Delta^{\vee}}$ and $x_iI_{(\lk_{\Delta}(x_i))^{\vee}}$ have  $(n-d)$-linear resolutions.
Note that $\mathcal{G}(I_{\Delta'^{\vee}})$ is the disjoint union of $\mathcal{G}(x'_iI_{\Delta^{\vee}})$ and $\mathcal{G}(x_iI_{(\lk_{\Delta}(x_i))^{\vee}})$. Now by \cite[Corollary 2.4]{splitting}, $I_{\Delta'^{\vee}}=x'_iI_{\Delta^{\vee}}+x_iI_{(\lk_{\Delta}(x_i))^{\vee}}$ is a Betti splitting and hence by \cite[Corollary 2.2]{splitting},
$$\reg(I_{\Delta'^{\vee}})=\max\{\reg(x'_iI_{\Delta^{\vee}}),\reg(x_iI_{(\lk_{\Delta}(x_i))^{\vee}}),\reg(x'_iI_{\Delta^{\vee}}\cap x_iI_{(\lk_{\Delta}(x_i))^{\vee}})-1\}.$$
As discussed above, $\reg(x'_iI_{\Delta^{\vee}})=\reg(x_iI_{(\lk_{\Delta}(x_i))^{\vee}})=n-d$.
Note that $x'_iI_{\Delta^{\vee}}\cap x_iI_{(\lk_{\Delta}(x_i))^{\vee}}=(x'_ix^{X\setminus F}:\ F\in \mathcal{F}(\Delta))\cap(x_ix^{X\setminus F}:\ F\in \mathcal{F}(\Delta), x_i\in F)=x'_ix_iI_{(\lk_{\Delta}(x_i))^{\vee}}$. So $\reg(x'_iI_{\Delta^{\vee}}\cap x_iI_{(\lk_{\Delta}(x_i))^{\vee}})=\reg(I_{(\lk_{\Delta}(x_i))^{\vee}})+2=n-d-1+2=n-d+1$.
Thus $\reg(I_{\Delta'^{\vee}})=n-d$. Since $I_{\Delta'^{\vee}}$ is homogenous of degree $n-d$, this implies that $I_{\Delta'^{\vee}}$ has a $(n-d)$-linear resolution (see for example \cite[Lemma 5.55]{ME}). Thus using again  \cite[Theorem 3]{ER} implies that $\Delta'$ is Cohen-Macaulay.
\end{proof}

The following theorem compares Buchsbaumness in a simplicial complex and its expansion.

\begin{thm}\label{Buchs}
Let $\Del$ be a simplicial complex on $X$ and let $\alpha\in\NN^n$. Then $\Del$ is Buchsbaum if and only if $\Del^\alpha$ is.
\end{thm}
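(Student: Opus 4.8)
The plan is to reduce, as in the proof of Theorem \ref{CM}, to the case of a single vertex duplication via Remark \ref{rem1}, and then exploit the homological characterization of Buchsbaum complexes together with the link formula of Proposition \ref{expansion}(ii). Recall that a simplicial complex $\Delta$ is Buchsbaum over $K$ if and only if it is pure and for every nonempty face $F\in\Delta$ the link $\lk_\Delta(F)$ is Cohen--Macaulay; equivalently, $\Delta$ is pure and $\lk_\Delta(x)$ is Cohen--Macaulay for every vertex $x$ (the Schenzel/Reisner-type criterion). I would state this criterion at the outset, citing the standard reference, since it turns the Buchsbaum condition into a condition about links of single vertices, which is exactly what Proposition \ref{expansion}(ii) controls.

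First I would handle purity: by Proposition \ref{expansion}(i), $\Delta$ is pure if and only if $\Delta^\alpha$ is pure, so this hypothesis transfers in both directions for free. Next, the heart of the argument is to compare the links of vertices. For any vertex $x_{i_lt_l}$ of $\Delta^\alpha$, choose a facet $G$ of $\langle F\rangle^\alpha$ containing it, where $x_i\in F$; by Proposition \ref{expansion}(ii), $\lk_{\Delta^\alpha}(x_{i_lt_l})$ sits inside $\lk_{\Delta^\alpha}(G)=(\lk_\Delta(F))^\alpha$. The cleanest route is to establish the vertex-link version directly: I expect that for each vertex $x_i\in X$ one has $\lk_{\Delta^\alpha}(x_{it}) \cong (\lk_\Delta(x_i))^{\alpha'}$ for a suitably restricted $\alpha'$, i.e. the link of a single expanded vertex is again an expansion of the link of the original vertex. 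Granting this, Theorem \ref{CM} applies: $\lk_\Delta(x_i)$ is Cohen--Macaulay if and only if its expansion $(\lk_\Delta(x_i))^{\alpha'}$ is Cohen--Macaulay. Running this equivalence over all vertices, together with the purity equivalence, yields that $\Delta$ is Buchsbaum if and only if $\Delta^\alpha$ is.

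Alternatively, and perhaps more in the spirit of the surrounding section, one can use the homology epimorphism of Proposition \ref{expansion}(iii). Buchsbaumness of a pure complex is equivalent to the vanishing $\tilde H_i(\lk_\Delta(F);K)=0$ for all $i<\dim(\lk_\Delta(F))$ and all \emph{nonempty} faces $F$ (dropping only the condition at $F=\emptyset$ that would force Cohen--Macaulayness). Applying Proposition \ref{expansion}(iii) to the links via the identification in (ii), the epimorphism $\theta\colon \tilde H_i(\lk_{\Delta^\alpha}(G);K)\twoheadrightarrow \tilde H_i(\lk_\Delta(F);K)$ gives one implication immediately (vanishing upstairs forces vanishing downstairs), and a direct analysis of $\ker(\theta)$ — showing it too vanishes in the relevant degrees when $\lk_\Delta(F)$ is Cohen--Macaulay — gives the converse.

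The main obstacle will be the precise description of the link of a single expanded vertex $\lk_{\Delta^\alpha}(x_{it})$ and verifying that it is itself an expansion of $\lk_\Delta(x_i)$; Proposition \ref{expansion}(ii) only records the link of a \emph{facet} $G$ of $\langle F\rangle^\alpha$, whereas Buchsbaumness requires control of links of arbitrary nonempty faces, in particular single vertices. I would spend the bulk of the argument carefully unwinding the definition of $\Delta^\alpha$ to identify $\lk_{\Delta^\alpha}(x_{it})$, keeping track of which copies of the remaining vertices survive, and confirming that the resulting complex is the expansion of $\lk_\Delta(x_i)$ (with the $i$-th coordinate of the expansion vector decremented appropriately). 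Once that identification is in place, the reduction to Theorem \ref{CM} is routine.
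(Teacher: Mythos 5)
Your proposal is correct and matches the paper's argument in all essentials: the paper likewise invokes the characterization of Buchsbaum complexes as pure complexes all of whose vertex links are Cohen--Macaulay (citing Billera--Rose), reduces to a single vertex duplication via Remark \ref{rem1}, observes that $\lk_{\Delta'}(x_i)=\lk_{\Delta'}(x'_i)=\lk_{\Delta}(x_i)$ while for $x_j\neq x_i,x'_i$ the link $\lk_{\Delta'}(x_j)$ is the duplication-expansion of $\lk_{\Delta}(x_j)$, and then applies Theorem \ref{CM}. The vertex-link identification you flag as the main obstacle is exactly the computation the paper carries out, so no further work is needed beyond what you describe.
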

\begin{proof}
The ``if'' part follows from Proposition \ref{expansion} and \cite[Theorem 8.1]{St}. Let $\Del$ be Buchsbaum. In the light of Remark \ref{rem1}, it is enough to show that any expansion of $\Delta$ obtained by duplication an arbitrary vertex, is Buchsbaum. Let $\Del'$ be the expansion of $\Del$ obtained by duplicating the vertex $x_i$. By \cite[Theorem 4.5]{BiRo}, a simplicial complex $\Del$ is Buchsbaum if and only if $\Del$ is pure and for any vertex $x\in X$, $\lk_\Del(x)$ is Cohen-Macaulay. We use this fact to prove the assertion.

Since $\lk_{\Del'}(x_i)=\lk_{\Del'}(x'_i)=\lk_\Del(x_i)$, it follows from Buchsbaum-ness of $\Del$ that $\lk_{\Del'}(x_i)$ and $\lk_{\Del'}(x'_i)$ are Cohen-Macaulay. Now suppose that $x_j\in X'=X\cup \{x'_i\}$ with $x_j\neq x_i$ and $x_j\neq x'_i$. Then $\lk_{\Del'}(x_j)$ is the expansion of $\lk_\Del(x_j)$ obtained by duplication of $x_i$. Since $\lk_\Del(x_j)$ is Cohen-Macaulay it follows from Theorem \ref{CM} that $\lk_{\Del'}(x_j)$ is Cohen-Macaulay. Therefore the assertion holds.
\end{proof}

Now, we study the sequentially Cohen-Macaulay property in an expanded complex.
For a simplicial complex $\Delta$ and a subcomplex $\Gamma$ of $\Delta$, $\Delta/\Gamma=\{F\in \Delta:\ F\notin \Gamma\}$ is called a \textbf{relative simplicial complex}.
\begin{lem}\label{rel}
Let $\Del$ be a $(d-1)$-dimensional simplicial complex, $\alpha\in\NN^n$ and $\Del_i$ be the subcomplex of $\Delta$ generated by the $i$-dimensional facets of $\Del$. Set
$$\Omega_i=\Del_i/(\Del_i\cap(\cup_{j>i}\Del_j)).$$
Then for all $0\leq i\leq d-1$, $\Omega^\alpha_i=\Del^\alpha_i/ (\Del^\alpha_i\cap(\cup_{j>i}\Del^\alpha_i)).$
\end{lem}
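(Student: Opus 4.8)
The plan is to express the expansion of every object occurring in the statement as a preimage under the support map, and then to exploit the fact that preimages commute with all the Boolean operations used to build $\Omega_i$. Recall from the proof of Proposition~\ref{expansion}(iii) the map $\varphi\colon\Del^\alpha\to\Del$ sending $\{x_{i_1j_1},\ldots,x_{i_qj_q}\}$ to $\{x_{i_1},\ldots,x_{i_q}\}$; since in any face of $\Del^\alpha$ the first indices are pairwise distinct, $\varphi$ is well defined on all faces of $\Del^\alpha$. Directly from the definition of the expansion one checks that, for any subcomplex $\Gamma\subseteq\Del$, a face $G$ lies in $\Gamma^\alpha$ if and only if $\varphi(G)\in\Gamma$; that is, $\Gamma^\alpha=\varphi^{-1}(\Gamma)$. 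Accordingly I will take the expansion of the relative complex $\Omega_i$ to be $\Omega_i^\alpha=\varphi^{-1}(\Omega_i)$, which coincides with the difference of the expansions of its two defining complexes, so that the whole identity reduces to matching the building blocks $\Del_i$ with their counterparts inside $\Del^\alpha$.

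The key step, and the only one that is not purely formal, is the identity $(\Del_i)^\alpha=(\Del^\alpha)_i$. By Proposition~\ref{expansion}(i) the expansion of a simplex has the same dimension as the simplex; moreover every facet $G$ of $\Del^\alpha$ lies in $\langle F\rangle^\alpha$ for the facet $F=\varphi(G)$ of $\Del$, with $\dim G=\dim F$, and conversely every facet of $\langle F\rangle^\alpha$ is already a facet of $\Del^\alpha$ (any larger face of $\Del^\alpha$ containing it would have support $F$ again, hence the same cardinality). Consequently the $i$-dimensional facets of $\Del^\alpha$ are precisely the facets of $\langle F\rangle^\alpha$ as $F$ runs over the $i$-dimensional facets of $\Del$. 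To identify the subcomplex they generate, observe that $H\in(\Del^\alpha)_i$ iff $H$ is contained in one of these facets, iff $\varphi(H)$ is contained in an $i$-dimensional facet of $\Del$ (the nontrivial direction lifts a containment $\varphi(H)\subseteq F$ to a facet over $F$ by choosing the remaining coordinates freely), iff $\varphi(H)\in\Del_i$. This yields $(\Del^\alpha)_i=\varphi^{-1}(\Del_i)=(\Del_i)^\alpha$.

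With this in hand the computation is immediate. Since $\varphi^{-1}$ preserves unions,
\[
\Big(\bigcup_{j>i}\Del_j\Big)^\alpha=\bigcup_{j>i}(\Del_j)^\alpha=\bigcup_{j>i}(\Del^\alpha)_j,
\]
and since it also preserves intersections and relative complements,
\[
\Omega_i^\alpha=\varphi^{-1}\!\Big(\Del_i\Big/\big(\Del_i\cap\bigcup_{j>i}\Del_j\big)\Big)=(\Del^\alpha)_i\Big/\big((\Del^\alpha)_i\cap\bigcup_{j>i}(\Del^\alpha)_j\big),
\]
which is the asserted equality (the subscript $i$ in the last union of the statement is a typo for $j$). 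The main obstacle is thus concentrated entirely in the dimension bookkeeping behind $(\Del_i)^\alpha=(\Del^\alpha)_i$: one must verify that expansion neither lowers the dimension of a facet nor produces facets lying over non-facets of $\Del$, both of which are ensured by Proposition~\ref{expansion}(i) together with the one-vertex-per-original-index structure of expanded faces. Everything else is the formal compatibility of $\varphi^{-1}$ with $\cup$, $\cap$ and set difference.
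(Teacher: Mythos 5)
Your proof is correct, and while its core coincides with the paper's, it is substantially more complete. The paper's own proof is a one-liner: it invokes the facts $(\Gamma_1\cup\Gamma_2)^\alpha=\Gamma_1^\alpha\cup\Gamma_2^\alpha$ and $(\Gamma_1\cap\Gamma_2)^\alpha=\Gamma_1^\alpha\cap\Gamma_2^\alpha$, implicitly reads $\Del_i^\alpha$ as $(\Del_i)^\alpha$, implicitly treats the expansion of a relative complex as the relative complex of the expansions, and leaves everything else to the reader. You do two things the paper does not. First, you obtain all the commutation statements at once by identifying $\Gamma^\alpha=\varphi^{-1}(\Gamma)$ for the support map $\varphi$ of Proposition~\ref{expansion}(iii), so compatibility with $\cup$, $\cap$ and set difference is automatic rather than checked operation by operation; this is a cleaner mechanism than the paper's ad hoc appeal. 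Second, and more substantively, you prove the identification $(\Del_i)^\alpha=(\Del^\alpha)_i$ by the facet/dimension bookkeeping (facets of $\Del^\alpha$ are exactly the facets of $\langle F\rangle^\alpha$ over facets $F$ of $\Del$, with dimension preserved), a point on which the paper's proof is silent. This is not an idle refinement: it is precisely what makes the right-hand side of the lemma equal to the relative complexes of $\Del^\alpha$ in the sense of Theorem~\ref{Stanley}, which is how the lemma is used in Corollary~\ref{SCM}; your reading of the (ambiguously typeset, and in the union index mistyped) right-hand side is the one the application actually requires. The only spot where you are terse --- that $\varphi(G)$ is a facet of $\Del$ whenever $G$ is a facet of $\Del^\alpha$ --- is filled by the same lifting argument you use elsewhere: a proper extension of $\varphi(G)$ inside $\Del$ would lift, by choosing any copy of the new vertex, to a proper extension of $G$ inside $\Del^\alpha$.
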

\begin{proof}
The assertion follows from the fact that if $\Gamma_1$ and $\Gamma_2$ are two simplicial complexes on $X$ and $\alpha\in\NN^n$ then $$(\Gamma_1\cup\Gamma_2)^\alpha=\Gamma^\alpha_1\cup\Gamma^\alpha_2,\qquad (\Gamma_1\cap\Gamma_2)^\alpha=\Gamma^\alpha_1\cap\Gamma^\alpha_2.$$
\end{proof}

\begin{thm}\label{Stanley}
(\cite[Proposition 2.10]{St}) Let $\Del$ be a $(d-1)$-dimensional simplicial complex on $X$. Then $\Del$ is sequentially Cohen-Macaulay if and
only if the relative simplicial complexes $\Omega_i$ (defined in Lemma \ref{rel}) are Cohen-Macaulay for $0\leq i\leq d-1$.
\end{thm}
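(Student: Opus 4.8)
The plan is to realize the relative complexes $\Omega_i$ as the successive quotients of the dimension filtration of $K[\Del]=S/I_{\Del}$, and then to compare any sequentially Cohen--Macaulay filtration with this one. Write $d-1=\dim\Del$ and, for $0\le i\le d$, let $\Sigma_{\ge i}=\bigcup_{j\ge i}\Del_j$ be the subcomplex generated by the facets of $\Del$ of dimension at least $i$, so that $\Del=\Sigma_{\ge 0}\supseteq\Sigma_{\ge 1}\supseteq\cdots\supseteq\Sigma_{\ge d}$ with $\Sigma_{\ge i}=\Del_i\cup\Sigma_{\ge i+1}$. The first step is the combinatorial identity $\Omega_i=\Sigma_{\ge i}/\Sigma_{\ge i+1}$, which follows from $\Sigma_{\ge i}=\Del_i\cup\Sigma_{\ge i+1}$ together with the relative-complex version of the second isomorphism theorem, $(\Del_i\cup\Sigma_{\ge i+1})/\Sigma_{\ge i+1}\cong\Del_i/(\Del_i\cap\Sigma_{\ge i+1})$. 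Passing to Stanley--Reisner data and using $K\{\Gamma/\Gamma'\}\cong I_{\Gamma'}/I_{\Gamma}$ for $\Gamma'\subseteq\Gamma$, this reads $K\{\Omega_i\}\cong I_{\Sigma_{\ge i+1}}/I_{\Sigma_{\ge i}}$, a module whose Krull dimension is $i+1$ precisely when $\Del$ has a facet of dimension $i$.

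Next I would assemble these into a filtration of $K[\Del]$. Put $N_i=I_{\Sigma_{\ge i}}/I_{\Del}\subseteq K[\Del]$; the inclusions above yield $0=N_0\subseteq N_1\subseteq\cdots\subseteq N_d=K[\Del]$ with $N_{i+1}/N_i\cong K\{\Omega_i\}$. The crucial point is that $\{N_i\}$ is the \emph{dimension filtration}, i.e.\ $N_{i+1}$ is the largest graded submodule of $K[\Del]$ of dimension at most $i+1$. To see this, note that for a monomial $m$ with $\supp(m)=F\in\Del$ the annihilator $(I_{\Del}:m)/I_{\Del}$ of $m$ in $K[\Del]$ has the same radical as $I_{\Del}:x^F$, whose dimension equals one plus the largest dimension of a facet containing $F$; hence $\dim(K[\Del]m)$ is one more than that quantity. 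Since the dimension filtration of a multigraded module is spanned by monomials, $N_{i+1}$ is spanned by those $m$ whose support lies in no facet of dimension exceeding $i$, and these are exactly the monomials of $I_{\Sigma_{\ge i+1}}/I_{\Del}$.

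With the filtration identified, the converse is immediate: if every $\Omega_i$ is Cohen--Macaulay then, deleting the indices for which $\Del$ has no $i$-dimensional facet, $\{N_i\}$ is a filtration of $K[\Del]$ with Cohen--Macaulay quotients $K\{\Omega_i\}$ of strictly increasing dimension, which is the definition of sequential Cohen--Macaulayness. For the forward implication, suppose $\Del$ is sequentially Cohen--Macaulay, witnessed by $0=M_0\subset M_1\subset\cdots\subset M_r=K[\Del]$ with $M_j/M_{j-1}$ Cohen--Macaulay of dimensions $e_1<\cdots<e_r$. Each Cohen--Macaulay quotient is unmixed, so by induction every associated prime of $K[\Del]/M_j$ has dimension $>e_j$, whence $K[\Del]/M_j$ has no nonzero submodule of dimension $\le e_j$; together with $\dim M_j=e_j$ this forces $M_j=N_{e_j}$. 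Therefore each nonzero quotient $N_{i+1}/N_i\cong K\{\Omega_i\}$ coincides with one of the $M_j/M_{j-1}$ and is Cohen--Macaulay.

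I expect the main obstacle to be exactly this comparison step in the forward direction: matching an abstract sequentially Cohen--Macaulay filtration with the concrete combinatorial filtration $\{N_i\}$, for which the unmixedness of Cohen--Macaulay modules is the essential algebraic input. A secondary point requiring care is the bookkeeping at the top of the chain, where the conventions for the void and irrelevant complexes must be chosen so that $N_d$ is genuinely all of $K[\Del]$ rather than $\fm/I_{\Del}$.
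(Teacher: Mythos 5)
The paper offers no proof of this statement for you to be compared against: Theorem \ref{Stanley} is quoted from Stanley's book (\cite[Proposition 2.10]{St}) and used as a black box, so any proof is necessarily ``a different route from the paper.'' Judged on its own, your argument is correct, and it is essentially the standard proof of this equivalence. Your two key steps --- (1) the submodules $N_{i+1}=I_{\Sigma_{\ge i+1}}/I_{\Del}$ form the dimension filtration of $K[\Del]$, with successive quotients $K\{\Omega_i\}\cong I_{\Sigma_{\ge i+1}}/I_{\Sigma_{\ge i}}$ of dimension $i+1$ when nonzero, and (2) any sequentially Cohen--Macaulay filtration must coincide with the jumps of the dimension filtration, by unmixedness of Cohen--Macaulay modules and the behaviour of associated primes along filtrations --- are exactly the two halves of the argument found in the literature (Stanley, Schenzel, Herzog--Hibi); what your proof buys over the paper's citation is self-containedness. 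Two points in your write-up deserve slightly more care. First, the assertion that the dimension filtration of a multigraded module is ``spanned by monomials'' relies on the standard but not-free fact that the largest submodule of dimension $\le i+1$ is $\ZZ^n$-graded; you can sidestep this entirely by noting that any submodule $U\subseteq K[\Del]$ of dimension $\le i+1$ maps into $K[\Del]/N_{i+1}\cong K[\Sigma_{\ge i+1}]$, all of whose associated primes $P_{X\setminus G}$ with $G\in\F(\Sigma_{\ge i+1})$ have dimension $\ge i+2$, so the image is zero and $U\subseteq N_{i+1}$. Second, since neither the paper nor the statement defines Cohen--Macaulayness for a relative complex, you should state explicitly that it is read module-theoretically ($K\{\Omega_i\}$ is a Cohen--Macaulay $S$-module, with the zero module counted as Cohen--Macaulay); this is Stanley's convention, and with it your bookkeeping at the top of the chain (the void complex $\Sigma_{\ge d}$ having Stanley--Reisner ideal all of $S$, so that $N_d=K[\Del]$) is the right one.
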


\begin{cor}\label{SCM}
Let $\Del$ be a simplicial complex and let $\alpha\in\NN^n$. Then $\Delta$ is sequentially Cohen-Macaulay if and only if $\Del^\alpha$ is sequentially Cohen-Macaulay.
\end{cor}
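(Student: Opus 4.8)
The plan is to reduce the statement, via Stanley's criterion, to the Cohen--Macaulayness of the relative complexes $\Omega_i$ and then to transfer that property across the expansion. Write $d-1=\dim(\Del)$. By Proposition \ref{expansion}(i) we have $\dim(\Del^\alpha)=\dim(\Del)=d-1$, so the same range $0\le i\le d-1$ governs both complexes. By Theorem \ref{Stanley}, $\Del$ is sequentially Cohen--Macaulay if and only if every $\Omega_i$ is Cohen--Macaulay, and $\Del^\alpha$ is sequentially Cohen--Macaulay if and only if every relative complex $(\Del^\alpha)_i/\bigl((\Del^\alpha)_i\cap(\cup_{j>i}(\Del^\alpha)_j)\bigr)$ is Cohen--Macaulay, where $(\Del^\alpha)_i$ denotes the subcomplex of $\Del^\alpha$ generated by its $i$-dimensional facets.

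The second step is to identify these relative complexes of $\Del^\alpha$ with $\Omega_i^\alpha$. A facet $F$ of $\Del$ and each of the expanded facets in $\langle F\rangle^\alpha$ have the same number of vertices, so the expansion preserves the dimension of every facet; consequently the $i$-dimensional facets of $\Del^\alpha$ are exactly the expansions of the $i$-dimensional facets of $\Del$, which (using $(\Gamma_1\cup\Gamma_2)^\alpha=\Gamma_1^\alpha\cup\Gamma_2^\alpha$) gives $(\Del^\alpha)_i=(\Del_i)^\alpha$. Substituting this into Lemma \ref{rel} identifies the $i$-th relative complex of $\Del^\alpha$ with $\Omega_i^\alpha$. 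Hence $\Del$ (resp. $\Del^\alpha$) is sequentially Cohen--Macaulay precisely when every $\Omega_i$ (resp. every $\Omega_i^\alpha$) is Cohen--Macaulay, and the corollary follows once I show that a relative complex $\Omega_i$ is Cohen--Macaulay if and only if its expansion $\Omega_i^\alpha$ is.

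This last equivalence is the heart of the matter and the main obstacle, since Theorem \ref{CM} was proved only for ordinary simplicial complexes whereas $\Omega_i$ is a relative complex. I would overcome it by developing relative analogues of the tools already in hand: a Reisner-type homological criterion characterizing Cohen--Macaulayness of $\Omega_i$ through the vanishing of the appropriate reduced relative homology of its links, combined with relative versions of the link identity in Proposition \ref{expansion}(ii) and of the surjection $\theta$ of Proposition \ref{expansion}(iii). The surjectivity of $\theta$, together with the control of its kernel coming from the commuting chain-map square used to construct it, should force the relevant homology of $\Omega_i$ to vanish exactly when that of $\Omega_i^\alpha$ does, yielding the equivalence. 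An alternative route avoids relative complexes altogether by invoking Duval's characterization of sequential Cohen--Macaulayness through the pure skeletons $\Del^{[i]}=\langle F\in\Del:\ \dim F=i\rangle$: these are \emph{ordinary} complexes satisfying $(\Del^{[i]})^\alpha=(\Del^\alpha)^{[i]}$, so Theorem \ref{CM} applies verbatim, at the cost of importing that external characterization and checking the passage between the two formulations of sequential Cohen--Macaulayness.
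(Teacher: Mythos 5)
Your opening steps reproduce the paper's own proof: the paper likewise combines Theorem \ref{Stanley} with Lemma \ref{rel} and then finishes by citing Theorem \ref{CM}. The obstacle you isolate --- that Theorem \ref{CM} is stated and proved only for honest simplicial complexes while the $\Omega_i$ are relative complexes --- is real, and the paper's three-line proof simply glosses over it. However, your primary plan for filling this gap would not succeed. The surjection $\theta$ of Proposition \ref{expansion}(iii) transfers vanishing of homology in one direction only: from $\Del^\alpha$ down to $\Del$. A relative Reisner-type criterion built on (a relative version of) $\theta$ could therefore prove at most that $\Omega_i^\alpha$ Cohen--Macaulay implies $\Omega_i$ Cohen--Macaulay. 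For the converse one must show that the relevant homology of the expansion vanishes, i.e.\ control $\ker(\theta)$, and nothing in the commuting square defining $\theta$ does this; the kernel is genuinely nonzero in general (already expanding a single vertex into two copies creates new reduced $0$-homology). This is precisely why the paper could not prove the hard direction of Theorem \ref{CM} with $\theta$ and instead passed through Alexander duality, the Eagon--Reiner theorem \cite{ER} and Betti splittings \cite{splitting}. So, as sketched, your main route establishes only half of the equivalence.

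Your alternative route, by contrast, is a correct and complete proof, and it is genuinely different from (and cleaner than) the paper's. Duval's criterion replaces the relative complexes $\Omega_i$ by the pure skeletons $\Del^{[i]}$, which are ordinary simplicial complexes; the identity $(\Del^{[i]})^\alpha=(\Del^\alpha)^{[i]}$ does hold, because a face of $\Del^\alpha$ contains at most one copy of each vertex of $\Del$, so expansion preserves the dimension of every face and every $i$-dimensional face of $\Del^\alpha$ extends to the expansion of an $i$-dimensional face of $\Del$; then Theorem \ref{CM} applies verbatim to each skeleton. What this buys is exactly what the $\Omega_i$ route lacks: it never needs a Cohen--Macaulayness-transfer statement for relative complexes, and so it even repairs the imprecision in the paper's own argument, at the cost of importing Duval's theorem, which is external to the paper. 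I would make the Duval argument your actual proof and drop the relative-analogue sketch.
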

\begin{proof}
Combining Theorem \ref{Stanley} and Lemma \ref{rel} with Theorem \ref{CM} we obtain the assertion.
\end{proof}

For $F\in\Del$, set $\star(F)=\{G\in\Del:F\cup G\in\Del\}$. Let $\Gamma_\Del$ be the induced subcomplex of $\Delta$ on the set $\c(X)$ where $\c(X)=\{x_i\in X:\star(\{x_i\})\neq\Del\}$. A combinatorial description of Gorenstein simplicial complexes was given in \cite{St}. It was proved that the simplicial complex $\Del$ is Gorenstein over a field $K$ if and only if for all $F\in\Gamma_\Del$, $\tilde{H}_i(\lk_{\Gamma_\Del}(F);K)\cong K$ if $i=\dim(\lk_{\Gamma_\Del}(F))$ and $\tilde{H}_i(\lk_{\Gamma_\Del}(F);K)$ is vanished if $i<(\lk_{\Gamma_\Del}(F))$ (see Theorem 5.1 of \cite{St}).

For  a face $F\in\Del^\alpha$, we set $\bar{F}=\{x_i: x_{ij}\in F\ \textrm{for some}\ j\}$.

\begin{lem}\label{core}
Let $\Del$ be a simplicial complex with $\Del=\Gamma_\Del$ and let $\alpha\in\NN^n$. Then $\Gamma_{\Del^\alpha}=(\Gamma_\Del)^\alpha$.
\end{lem}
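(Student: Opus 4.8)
The plan is to reduce the identity to a statement about cone vertices (apexes) and then verify it directly from the combinatorial structure of $\Del^\alpha$. Recall that $\Del=\Gamma_\Del$ holds exactly when no vertex $x_i$ of $\Del$ is an apex, i.e. $\star(\{x_i\})\neq\Del$ for every vertex $x_i$. Under this hypothesis $\Gamma_\Del=\Del$, so $(\Gamma_\Del)^\alpha=\Del^\alpha$, and the whole claim collapses to showing that $\Del^\alpha$ itself has no apex, i.e. that every vertex of $\Del^\alpha$ lies in $\c(X^\alpha)$, whence the induced subcomplex $\Gamma_{\Del^\alpha}$ is all of $\Del^\alpha$.

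First I would record the face description of the expansion: a set $H\subseteq X^\alpha$ lies in $\Del^\alpha$ if and only if $\bar H\in\Del$ and the assignment $x_{ij}\mapsto x_i$ is injective on $H$ (no two vertices of a face share the same first index). This is immediate from the definition of $\langle F\rangle^\alpha$, whose facets use each index once, together with $\Del^\alpha=\bigcup_{F\in\Del}\langle F\rangle^\alpha$; it is the tool that controls every verification below.

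The key step is to show that every vertex $x_{ij}$ of $\Del^\alpha$ fails to be an apex, by exhibiting $H\in\Del^\alpha$ with $\{x_{ij}\}\cup H\notin\Del^\alpha$. I would split according to $s_i$. If $s_i\geq 2$, choose $j'\neq j$ and take $H=\{x_{ij'}\}$: then $\{x_{ij},x_{ij'}\}$ repeats the first index $i$ and so is not a face, giving the witness for free (twin vertices can never lie in a common face). If $s_i=1$, I use the hypothesis on $\Del$: pick $G\in\Del$ with $\{x_i\}\cup G\notin\Del$ (such $G$ exists precisely because $x_i$ is not an apex of $\Del$, and necessarily $x_i\notin G$), and lift it to $H=\{x_{k1}:x_k\in G\}$. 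Then $\bar H=G\in\Del$ with distinct first indices, so $H\in\Del^\alpha$, while $\overline{\{x_{i1}\}\cup H}=\{x_i\}\cup G\notin\Del$ forces $\{x_{i1}\}\cup H\notin\Del^\alpha$. In either case $\star_{\Del^\alpha}(\{x_{ij}\})\neq\Del^\alpha$, so $x_{ij}\in\c(X^\alpha)$.

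Finally, since every vertex of $\Del^\alpha$ belongs to $\c(X^\alpha)$, we get $\Gamma_{\Del^\alpha}=\Del^\alpha=(\Gamma_\Del)^\alpha$. The only genuinely delicate point is the $s_i=1$ lift: one must check both that $H$ really lies in $\Del^\alpha$ and that adjoining $x_{i1}$ yields a non-face, which is exactly where the injectivity-of-first-indices description and the non-apex witness $G$ are both used. Alternatively, the same conclusion follows by the induction of Remark \ref{rem1}, since a single vertex duplication turns $x_i$ into two twins $x_{i1},x_{i2}$ that cannot share a face while leaving the non-apex status of every other vertex unchanged, so the property $\Del=\Gamma_\Del$ is preserved under duplication.
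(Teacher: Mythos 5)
Your proof is correct and rests on the same key fact as the paper's: if $\star(\{x_i\})\neq\Del$ then $\star(\{x_{ij}\})\neq\Del^\alpha$ for every copy $x_{ij}$, which is exactly the step the paper asserts (without detail) in its converse inclusion. The difference is only organizational — you first collapse $(\Gamma_\Del)^\alpha=\Del^\alpha$ via the hypothesis and then verify $\c(X^\alpha)=X^\alpha$ directly, supplying the explicit witnesses (twin vertices when $s_i\geq 2$, lifted non-cone witnesses when $s_i=1$) that the paper's two-inclusion argument leaves implicit.
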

\begin{proof}
Let $F$ be a facet of $\Gamma_{\Del^\alpha}$. Then $F\in\Del^\alpha$ and $F\subset\c(X^\alpha)$. It is clear that $\bar{F}\in\Del= \Gamma_\Del$. So $F\in(\Gamma_\Del)^\alpha$.

Conversely, suppose that $F\in(\Gamma_\Del)^\alpha$. Then $F\in G^\alpha$ for some $G\in \Gamma_\Del$. Hence $G\in\Del$ and $G\subset\c(X)$. This implies that for every $x_i\in G$, $\star(\{x_i\})\neq\Del$. Thus for every $x_i\in G$ and all $1\leq j\leq k_i$, $\star(\{x_{ij}\})\neq\Del^\alpha$. Therefore $F\subset\c(X^\alpha)$. Finally, it follows from $F\in\Del^\alpha$ that $F\in\Gamma_{\Del^\alpha}$.
\end{proof}

\begin{thm}\label{Gor}
Let $\Del$ be a simplicial complex with $\Del=\Gamma_\Del$ and let $\alpha\in\NN^n$. If $\Del^\alpha$ is Gorenstein then $\Del$ is Gorenstein,  too.
\end{thm}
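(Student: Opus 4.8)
The plan is to verify Stanley's criterion (quoted just before the statement) for $\Del$ directly, exploiting that $\Del=\Gamma_\Del$. First I would observe that since $\Del^\alpha$ is Gorenstein it is in particular Cohen--Macaulay, so by Theorem \ref{CM} the complex $\Del$ is Cohen--Macaulay as well; by Reisner's criterion (Theorem \ref{Reis}) this already yields $\tilde{H}_i(\lk_\Del(F);K)=0$ for every $F\in\Del$ and every $i<\dim(\lk_\Del(F))$. Hence the vanishing half of Stanley's criterion is free, and it remains only to prove that $\tilde{H}_{d_F}(\lk_\Del(F);K)\cong K$ for every $F\in\Del$, where $d_F=\dim(\lk_\Del(F))$.

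Fix $F\in\Del$ and choose a facet $G$ of $\langle F\rangle^\alpha$ lying over $F$ (so $\bar G=F$). By Proposition \ref{expansion}(ii) we have $\lk_{\Del^\alpha}(G)=(\lk_\Del(F))^\alpha$, and by Proposition \ref{expansion}(i) its dimension is again $d_F$. Lemma \ref{core}, together with $\Del=\Gamma_\Del$, gives $\Gamma_{\Del^\alpha}=\Del^\alpha$, so $G\in\Gamma_{\Del^\alpha}$ and Stanley's criterion applied to the Gorenstein complex $\Del^\alpha$ at the face $G$ yields $\tilde{H}_{d_F}((\lk_\Del(F))^\alpha;K)\cong K$. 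Now I would apply Proposition \ref{expansion}(iii) to the complex $\lk_\Del(F)$ in place of $\Del$: it supplies an epimorphism $\tilde{H}_{d_F}((\lk_\Del(F))^\alpha;K)\twoheadrightarrow\tilde{H}_{d_F}(\lk_\Del(F);K)$. Consequently $\tilde{H}_{d_F}(\lk_\Del(F);K)$ is a quotient of $K$, hence is either $0$ or $K$.

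The crux, and the step I expect to be the main obstacle, is to exclude the value $0$. One can first upgrade Proposition \ref{expansion}(iii): the first-copy inclusion $x_i\mapsto x_{i1}$ induces a simplicial map $\iota$ with $\varphi\circ\iota=\mathrm{id}$, exhibiting $\lk_\Del(F)$ as a retract of $(\lk_\Del(F))^\alpha$, so that $\tilde{H}_{d_F}(\lk_\Del(F);K)$ is a direct summand of $K$; this, however, still only shows it is $0$ or $K$, so the retraction alone does not settle the matter. To force the nonzero value I would use $\Del=\Gamma_\Del$ in an essential way, reducing by a downward induction dual to Remark \ref{rem1} to a single duplication $\Del'=\Del\cup\langle(F\setminus\{x_i\})\cup\{x'_i\}:x_i\in F\in\mathcal{F}(\Del)\rangle$ and analyzing the Mayer--Vietoris sequence of the pair $A=\Del$ and the cone $B=\{x'_i\}\ast\lk_\Del(x_i)$, whose intersection is $L=\lk_\Del(x_i)$. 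Writing $d-1=\dim\Del$, Cohen--Macaulayness forces $\dim L=d-2$, and the sequence collapses to a short exact sequence $0\to\tilde{H}_{d-1}(\Del;K)\to\tilde{H}_{d-1}(\Del';K)\to\tilde{H}_{d-2}(L;K)\to 0$, which ties the survival of the top class of $\Del$ to the top homology of the link of the duplicated vertex. The delicate point is precisely that $\Del=\Gamma_\Del$ (no cone point) must be brought to bear, through the ridge structure and strong connectedness of the Cohen--Macaulay complex $\Del$, to control the connecting homomorphism and propagate the conclusion $\tilde{H}_{d_F}(\lk_\Del(F);K)\cong K$ to every face $F$ at once; organizing this interaction between the core hypothesis and the Mayer--Vietoris boundary map is where I anticipate the real difficulty.
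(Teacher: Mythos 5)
Your first three steps are, step for step, the paper's entire proof: the paper disposes of Theorem \ref{Gor} with the single sentence that it follows from Lemma \ref{core}, Proposition \ref{expansion} and \cite[Theorem 5.1]{St}, which is exactly your reduction --- $\Gamma_{\Del^\alpha}=(\Gamma_\Del)^\alpha=\Del^\alpha$, Stanley's criterion for $\Del^\alpha$ at a facet $G$ of $\langle F\rangle^\alpha$, Proposition \ref{expansion}(ii) to identify $\lk_{\Del^\alpha}(G)=(\lk_\Del (F))^\alpha$, and the epimorphism of Proposition \ref{expansion}(iii). The crux you then isolate is a genuine gap, and it is a gap in the paper's one-line proof as well, not only in yours: an epimorphism $K\twoheadrightarrow \tilde{H}_{d_F}(\lk_\Del(F);K)$ only yields $0$ or $K$, and none of the cited ingredients excludes $0$. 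The danger is real: for the path $P=\langle \{a,b\},\{b,c\}\rangle$, the expansion duplicating $b$ is a $4$-cycle, which is Gorenstein with top homology $K$, while $\tilde{H}_1(P;K)=0$; so the surjection (equivalently, your retraction) genuinely can kill the top class, and only the hypothesis $\Del=\Gamma_\Del$ (which $P$ violates) can prevent this. Your Mayer--Vietoris sketch does not close the gap: the sequence $0\to\tilde{H}_{d-1}(\Del;K)\to\tilde{H}_{d-1}(\Del';K)\to\tilde{H}_{d-2}(\lk_\Del(x_i);K)\to 0$ is indeed exact, but it merely restates the dichotomy (either the top class of $\Del$ survives or the link of the duplicated vertex carries it), and the face-by-face propagation you gesture at is further obstructed because the property $\Del=\Gamma_\Del$ is not inherited by links. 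So, as written, your proposal does not prove the theorem.

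Here is a way to finish that uses the core hypothesis combinatorially rather than homologically, and it shows the theorem holds for a rather degenerate reason. Suppose some $s_i\geq 2$. By Lemma \ref{core}, $\Del^\alpha=\Gamma_{\Del^\alpha}$, so by \cite[Theorem 5.6.2]{BrHe} (quoted in the paper's own remark after Theorem \ref{Gor}) $\Del^\alpha$ is Cohen--Macaulay and Euler; in particular every codimension-one face of $\Del^\alpha$ lies in exactly two facets. Let $G'$ be a facet of $L:=\lk_{\Del^\alpha}(x_{i1})=(\lk_\Del(x_i))^\alpha$. Since no face of $\Del^\alpha$ contains two copies of the same vertex, $G'$ is a codimension-one face of $\Del^\alpha$ lying in the $s_i$ distinct facets $G'\cup\{x_{ij}\}$, $1\leq j\leq s_i$; hence $s_i=2$, and no facet avoiding both $x_{i1}$ and $x_{i2}$ contains any facet of $L$. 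Because Cohen--Macaulay complexes are connected in codimension one, every facet of $\Del^\alpha$ must then contain $x_{i1}$ or $x_{i2}$: a facet containing neither, but sharing a codimension-one face with a facet $\{x_{i1}\}\cup G''$, would have to share exactly $G''$, producing a forbidden third facet through $G''$. Translating back through the expansion, every facet of $\Del$ contains $x_i$, i.e.\ $\star(\{x_i\})=\Del$, contradicting $\Del=\Gamma_\Del$. Hence $\alpha=(1,\ldots,1)$, so $\Del^\alpha\cong\Del$ and the conclusion is immediate; for nontrivial $\alpha$ the hypothesis of the theorem is never satisfied. Some argument of this kind --- or any other device excluding the value $0$ for the top homology of the links --- is indispensable, both for your write-up and for the paper's.
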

\begin{proof}
The assertion follows from Lemma \ref{core}, Proposition \ref{expansion} and \cite[Theorem 5.1]{St}.
\end{proof}

\begin{rem}
The converse of Theorem \ref{Gor} does not hold in general. To see this fact we first recall a result from \cite{BrHe} which implies the necessary and sufficient conditions for a simplicial complex to be Gorenstein.

By \cite[Theorem 5.6.2]{BrHe} for a simplicial complex $\Del$ with $\Del=\Gamma_\Del$, $\Del$
is Gorenstein over a field $K$ if and only if $\Del$ is an Euler complex which is Cohen-Macaulay over $K$. Recall that the simplicial complex $\Del$ of dimension $d$ is an \textbf{Euler complex} if $\Del$ is pure and $\widetilde{\chi}(\lk_\Del(F))=(-1)^{\dim \lk_\Del(F)}$ for all $F\in\Del$ where $\widetilde{\chi}(\Del)=\sum^{d-1}_{i=0}(-1)^if_i(\Del)-1$.

Now, consider the simplicial complex $\Del=\langle x_1x_2,x_1x_3,x_2x_3\rangle$ on $\{x_1,x_2,x_3\}$ and let $\alpha=(2,1,1)$. Then $\Del=\Gamma_\Del$ and $\Del^\alpha=\Gamma_{\Del^\alpha}$. It is easy to check that $\Del$ is Euler. Also, $\Del$ is Cohen-Macaulay and so it is Gorenstein. On the other hand, $\widetilde{\chi}(\lk_{\Del^\alpha}(\{x_2\}))=2\neq 1=(-1)^{\dim \lk_{\Del^\alpha}(\{x_2\})}$. It follows that $\Del^\alpha$ is not Euler and so it is not Gorenstein.
\end{rem}

The following theorem was proved in \cite{RaYa}.

\begin{thm}\cite[Theorem 2.10]{RaYa}\label{kdual}
A simplicial complex $\Delta$ is  $k$-decomposable if and only if $I_{\Delta^{\vee}}$ is a $k$-decomposable ideal,
where $k\leq\dim(\Delta)$.
\end{thm}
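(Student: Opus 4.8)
The plan is to prove the equivalence by Alexander duality, setting up a dictionary that turns the recursive combinatorial decomposition of $\Del$ into the recursive algebraic decomposition of $I_{\Del^\vee}$ step by step. Recall that $\G(I_{\Del^\vee})=\{x^{F^c}:F\in\F(\Del)\}$, so the facets of $\Del$ correspond bijectively to the minimal generators of $I_{\Del^\vee}$. To a nonempty face $\sigma\in\Del$ I attach the squarefree monomial $u=x^\sigma$; then $|\supp(u)|=\dim(\sigma)+1$, so the size constraints $\dim(\sigma)\le k$ and $|\supp(u)|\le k+1$ agree. I would run an induction on $|\F(\Del)|$ (equivalently on the number of minimal generators of $I_{\Del^\vee}$), the base case $|\F(\Del)|=1$ being exactly the situation where $\Del$ is a simplex and $I_{\Del^\vee}$ is principal, so both sides hold by definition.

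The first block of work is to identify the two derived objects on each side. Since $I_{\Del^\vee}$ is squarefree, a generator $x^{F^c}$ satisfies $[u,x^{F^c}]=1$ precisely when $\sigma\cap F^c=\emptyset$, that is when $\sigma\subseteq F$; hence $(I_{\Del^\vee})_u=(x^{F^c}:\sigma\subseteq F)$ and $(I_{\Del^\vee})^u=(x^{F^c}:\sigma\not\subseteq F)$. I would first check the link identity: if $\sigma\subseteq F$ then $F^c\subseteq X\setminus\sigma$, and viewing $\lk_\Del(\sigma)$ on the ambient vertex set $X\setminus\sigma$, the complement of its facet $F\setminus\sigma$ is again $F^c$; this yields $(I_{\Del^\vee})_u=I_{(\lk_\Del(\sigma))^\vee}$ directly. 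Next comes the shedding equivalence: for a facet $F\supseteq\sigma$ and $x_l\in\sigma$, a generator $x^{(F')^c}$ of $(I_{\Del^\vee})^u$ satisfies $x^{(F')^c}:x^{F^c}=x_l$ if and only if $F\setminus F'=\{x_l\}$, and, after extending faces to facets, producing such an $F'$ with $\sigma\not\subseteq F'$ is exactly the exchange property in the definition of a shedding face. Thus $\sigma$ is a shedding face of $\Del$ if and only if $u$ is a shedding monomial of $I_{\Del^\vee}$.

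The remaining identity, $(I_{\Del^\vee})^u=I_{(\Del\setminus\sigma)^\vee}$, is the one I expect to be the main obstacle, because a priori $\Del\setminus\sigma$ may acquire new facets $F\setminus\{v\}$ (with $\sigma\subseteq F$, $v\in\sigma$) whose complements $v\,x^{F^c}$ do not appear among the generators of $(I_{\Del^\vee})^u$. The key point is that the shedding hypothesis rules this out: I would show that if $\sigma$ is a shedding face then $\F(\Del\setminus\sigma)=\{F\in\F(\Del):\sigma\not\subseteq F\}$. Indeed, let $G$ be a facet of $\Del\setminus\sigma$ that is not a facet of $\Del$, pick a facet $F\supsetneq G$ (which forces $\sigma\subseteq F$) and $v\in\sigma\setminus G$. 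If some $y\in F\setminus G$ lies outside $\sigma$, then $G\cup\{y\}\in\Del\setminus\sigma$ contradicts the maximality of $G$; otherwise $F\setminus G\subseteq\sigma$, and the exchange property applied to $F$ and $v$ gives $w\notin F$ with $(F\setminus\{v\})\cup\{w\}\in\Del$, a face of $\Del\setminus\sigma$ strictly containing $G$, again a contradiction. Hence no new facets arise, the complements match, and $(I_{\Del^\vee})^u=I_{(\Del\setminus\sigma)^\vee}$.

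With the dictionary in hand the induction closes quickly. In the forward direction a shedding face $\sigma$ of a non-simplex $\Del$ lies in some but not all facets, so both $\lk_\Del(\sigma)$ and $\Del\setminus\sigma$ have strictly fewer facets; if both are $k$-decomposable, then by the induction hypothesis $(I_{\Del^\vee})_u$ and $(I_{\Del^\vee})^u$ are $k$-decomposable, and since $u$ is a shedding monomial with $|\supp(u)|\le k+1$, so is $I_{\Del^\vee}$. The reverse direction runs the same equivalences backwards, once one observes that for the squarefree ideal $I_{\Del^\vee}$ any shedding monomial may be replaced by the product of its exponent-one variables without changing $I_u$ or $I^u$ (variables occurring to a power $\ge 2$ are inert against squarefree generators); this reduces us to $u=x^\sigma$ with $\sigma=\supp(u)$ a genuine nonempty face of $\Del$. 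The only bookkeeping to watch is the range of $k$: both notions of $k$-decomposability are monotone in $k$ and only invoke the size condition $\dim(\sigma)\le k$, so the equivalence may be proved for all $k\ge 0$ by the same induction, the asserted range $k\le\dim(\Del)$ being a special case, while the degenerate complexes $\{\}$, $\{\emptyset\}$ and the corresponding ideals are matched by the stated conventions.
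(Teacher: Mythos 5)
This statement is not proved in the paper at all: it is quoted verbatim from \cite[Theorem 2.10]{RaYa}, so there is no internal proof to compare against. Your argument is, in substance, a correct reconstruction of the duality dictionary by which the cited source establishes the result: faces $\sigma\in\Del$ correspond to squarefree monomials $x^\sigma$, the identification $\G(I_{\Del^\vee})=\{x^{F^c}:F\in\F(\Del)\}$ turns $(I_{\Del^\vee})_u$ into $I_{(\lk_\Del\sigma)^\vee}$ and $(I_{\Del^\vee})^u$ into $I_{(\Del\setminus\sigma)^\vee}$, and shedding faces correspond to shedding monomials. I checked the two places where the work really lies and both are sound: (a) your lemma that under the shedding hypothesis $\F(\Del\setminus\sigma)=\{F\in\F(\Del):\sigma\not\subseteq F\}$ (so no new facets spoil the identification of $(I_{\Del^\vee})^u$) is correct, and it is genuinely needed, both for the ideal identity and for the facet count driving the induction; (b) your reduction of an arbitrary shedding monomial of the squarefree ideal $I_{\Del^\vee}$ to a squarefree one, with $\supp(u)$ a face because $I_u\neq 0$ forces $\supp(u)\subseteq F$ for some facet $F$, is also correct. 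Two steps are compressed but harmless: the equivalence of the exchange property quantified over all faces $\tau\supseteq\sigma$ with the facet-level condition $F\setminus F'=\{x_l\}$ requires the short ``extend to a facet'' argument in both directions (in particular, that the witness facet $F'$ cannot contain $v$, by maximality of $F$), and the Alexander duals of $\lk_\Del(\sigma)$ and $\Del\setminus\sigma$ are taken over ambient vertex sets that may exceed their actual vertex sets, which changes the dual ideal by a common monomial factor; that this is immaterial is exactly \cite[Lemma 2.6]{RaYa}, the same lemma this paper invokes in the proof of Theorem \ref{decom}. Neither point is a gap in the sense of a wrong or missing idea; both are fillable in a few lines.
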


We use the above theorem to prove the following result, which generalizes \cite[Theorem 2.7]{KhMo}.

\begin{thm}\label{decom}
Let $\Delta$ be a simplicial complex and $\alpha\in\NN^n$. The $\Delta$ is $k$-decomposable if and only if $\Delta^{\alpha}$ is $k$-decomposable.
\end{thm}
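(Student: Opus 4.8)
The plan is to reduce everything to a single vertex duplication and then to exhibit the newly created vertex as a shedding vertex. Throughout I may assume $k\geq 0$ (recall that $0$-decomposability is vertex decomposability). By Remark \ref{rem1} and Proposition \ref{expansion}(i) it suffices to treat the duplication $\Del'$ of $\Del$ at a vertex $x_i$, with $X'=X\cup\{x'_i\}$: the general expansion $\Del^\alpha$ is reached by iterating, and $\dim\Del^\alpha=\dim\Del$ keeps the admissible range of $k$ unchanged. I will use the two basic closure properties of the class of $k$-decomposable complexes, namely that it is closed under passing to a vertex link $\lk_{\Del}(v)$ and under a vertex deletion $\Del\setminus v$; these follow from Woodroofe's definition (\cite{Wo}) by a direct induction on the number of faces.

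For the implication ``$\Del$ $k$-decomposable $\Rightarrow\Del'$ $k$-decomposable'' I claim that $\sigma=\{x'_i\}$ is a shedding face of $\Del'$. Indeed, no facet of $\Del'$ contains both $x_i$ and $x'_i$, so any face $\tau\ni x'_i$ omits $x_i$; taking $w=x_i$, the set $(\tau\cup\{x_i\})\setminus\{x'_i\}$ lies inside some facet $F\in\mathcal F(\Del)$ with $x_i\in F$, hence is a face of $\Del'$, which is exactly the required exchange condition. Since $\dim\sigma=0\le k$, and since $\lk_{\Del'}(\{x'_i\})=\lk_{\Del}(x_i)$ (recorded in the proof of Theorem \ref{Buchs}) while $\Del'\setminus\{x'_i\}=\Del$ (immediate from the construction), the recursive step applies: $\Del$ is $k$-decomposable by hypothesis and $\lk_{\Del}(x_i)$ is $k$-decomposable by link closure, whence $\Del'$ is $k$-decomposable. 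Feeding this into Remark \ref{rem1} shows $\Del^\alpha$ is $k$-decomposable for every $\alpha$.

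For the converse, observe that $\Del$ is recovered from $\Del^\alpha$ by restricting to the ``first copies'' $x_{i1}$ of its vertices, that is, by successively deleting every duplicate vertex $x_{ij}$ with $j\ge 2$. Concretely, after the relabelling $x_{i1}\mapsto x_i$, the complex $\Del$ is isomorphic to the one obtained from $\Del^\alpha$ by the iterated deletions $\Del^\alpha\setminus x_{ij}$ ($j\ge 2$). Applying deletion closure once for each duplicate vertex, the $k$-decomposability of $\Del^\alpha$ propagates down to $\Del$. Together with the forward direction this gives the equivalence.

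The main obstacle is exactly the two closure properties: preservation of $k$-decomposability under a vertex deletion (used in the converse) and under a vertex link (used for the child $\lk_{\Del}(x_i)$ in the forward step) must be argued carefully, the delicate point being the case of a shedding face that meets the deleted or linked vertex. A second, equivalent route trades this combinatorics for Theorem \ref{kdual}: by the computation in the proof of Theorem \ref{CM} one has $I_{\Del'^\vee}=x'_iI_{\Del^\vee}+x_iI_{(\lk_{\Del}(x_i))^\vee}$, and $u=x'_i$ is a shedding monomial with $I^u=x'_iI_{\Del^\vee}$ and $I_u=x_iI_{(\lk_{\Del}(x_i))^\vee}$; since multiplying all generators by a fresh variable does not affect decomposability, the statement reduces to the $k$-decomposability of $I_{\Del^\vee}$ and of $I_{(\lk_{\Del}(x_i))^\vee}$, i.e. through Theorem \ref{kdual} to the very same two ingredients. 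Either way, once the closure properties are established the theorem follows.
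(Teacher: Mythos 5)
Your forward direction is correct, and it is in fact a more direct route than the paper's: you exhibit $\{x'_i\}$ as a shedding vertex of $\Del'$, check the exchange property using the witness $w=x_i$, and then use $\Del'\setminus x'_i=\Del$ and $\lk_{\Del'}(x'_i)=\lk_{\Del}(x_i)$ together with closure of $k$-decomposability under links (\cite[Proposition 3.7]{Wo}). The paper instead passes to the Alexander dual, verifies that $x'_i$ is a shedding monomial for $I_{\Del'^{\vee}}=x'_iI_{\Del^{\vee}}+x_iI_{(\lk_{\Del}(x_i))^{\vee}}$, and concludes via Theorem \ref{kdual}; your combinatorial version and your sketched ideal-theoretic alternative both capture this half correctly.

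The converse, however, has a genuine gap: it rests entirely on the claim that $k$-decomposability is closed under deletion of an arbitrary vertex, and that claim is false. Consider $\Del=\langle\{1,2,3\},\{v,2,3\},\{v,3,4\},\{3,4,5\}\rangle$. The listed order of facets is a shelling, so $\Del$ is a $2$-dimensional shellable complex, hence $2$-decomposable by Theorem \ref{decomshell}; but $\Del\setminus v=\langle\{1,2,3\},\{3,4,5\}\rangle$ is $2$-dimensional and not shellable (its two facets meet in a single vertex, so no order can satisfy the shelling condition), hence not $2$-decomposable, again by Theorem \ref{decomshell}. So no ``direct induction on the number of faces'' can establish deletion closure: induced subcomplexes simply do not inherit $k$-decomposability, and the delicate point you flagged (a shedding face meeting the deleted vertex) is not merely delicate but fatal for the general statement. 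What is true is the much more special fact that deleting a duplicated vertex $x_{ij}$, $j\geq 2$, from $\Del^{\alpha}$ preserves $k$-decomposability; but since $\Del^{\alpha}\setminus x_{ij}$ is exactly the expansion $\Del^{(s_1,\ldots,s_{i-1},s_i-1,s_{i+1},\ldots,s_n)}$, that special fact \emph{is} (an iterate of) the converse you are trying to prove, so invoking it as a closure property is circular. The paper closes this gap by inducting on the decomposition of $\Del'$ itself: given a shedding face $\sigma$ of $\Del'$, it distinguishes the case $x_i\in\sigma$ or $x'_i\in\sigma$, where $\Del'\setminus\sigma$ recovers a copy of $\Del$, from the case $\sigma\cap\{x_i,x'_i\}=\emptyset$, where $\lk_{\Del'}\sigma$ and $\Del'\setminus\sigma$ are again one-vertex expansions of $\lk_{\Del}\sigma$ and $\Del\setminus\sigma$, so the induction hypothesis applies and $\sigma$ remains a shedding face of $\Del$. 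Your proposal needs this argument (or an equivalent one exploiting the expansion structure) to be complete.
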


\begin{proof}
``Only if part'': Considering Remark \ref{rem1}, it is enough to show that an expansion of $\Delta$ obtained by duplicating one vertex, is $k$-decomposable.
Let $X$ be the vertex set of $\Delta$, $x_i\in X$ and $\Delta'$ be an expansion of $\Delta$ by duplicating $x_i$. Then $\Delta'=\Delta \cup \langle  (F\setminus\{x_i\})\cup\{x'_i\}:\ F\in \mathcal{F}(\Delta), x_i\in F\rangle$. As was shown in the proof of Theorem \ref{CM},
$$I_{\Delta'^{\vee}}=x'_iI_{\Delta^{\vee}}+x_iI_{(\lk_{\Delta}(x_i))^{\vee}}.$$
It is easy to see that $(I_{\Delta'^{\vee}})_{x'_i}=x_iI_{(\lk_{\Delta}(x_i))^{\vee}}$   and $(I_{\Delta'^{\vee}})^{x'_i}=x'_iI_{\Delta^{\vee}}$.
Now, let $\Delta$ be $k$-decomposable.
Then by \cite[Proposition 3.7]{Wo}, $\lk_{\Delta}(x_i)$ is $k$-decomposable. Thus \cite[Theorem 2.10, Lemma 2.6]{RaYa} imply that $x'_iI_{\Delta^{\vee}}$ and $x_iI_{(\lk_{\Delta}(x_i))^{\vee}}$ are $k$-decomposable ideals. Also for any minimal generator $x_ix^{X\setminus F}\in (I_{\Delta'^{\vee}})_{x'_i}$,
we have $(x'_ix^{X\setminus F}:x_ix^{X\setminus F})=(x'_i)$. Thus $x'_i$ is a shedding face of $\Delta'$. Clearly $\dim(x'_i)\leq k$. Thus $I_{\Delta'^{\vee}}$ is a $k$-decomposable ideal. So  using again \cite[Theorem 2.10]{RaYa} yields that $\Delta'$ is $k$-decomposable.

``If part'': Let $\Del'$ be an expansion of $\Del$ obtained by duplicating one vertex $x_i$. We first show that if $\Del'$ is $k$-decomposable then $\Del$ is $k$-decomposable, too.

If $\F(\Del)=\{F\}$ then the expansion of $\Del$ obtained by duplicating one vertex $x_i$ is $\Del'=\langle x_i,x'_i\rangle\ast\langle F\backslash x_i\rangle$ and so it is $k$-decomposable,by Proposition 3.8 of \cite{Wo}. Hence suppose that $\Del$ has more than one facet. Let $\sigma$ be a shedding face of $\Del'$ and let $\lk_{\Del'}\sigma$ and $\Del'\backslash\sigma$ are $k$-decomposable. We have two cases:

Case 1. Let $x'_i\in \sigma$. Then $\Del=\Del'\backslash\sigma$ and so $\Del$ is $k$-decomposble. Similarly, if $x_i\in\sigma$ then $\Del'\backslash\sigma$ and $\Del$ are isomorphic and we are done.

Case 2. Let $x_i\not\in\sigma$ and $x'_i\not\in\sigma$. Then $\lk_{\Del'}\sigma$ and $\Del'\backslash\sigma$ are, respectively, the expansions of $\lk_{\Del}\sigma$ and $\Del\backslash\sigma$ obtained by duplicating $x_i$. Therefore it follows from induction that $\lk_{\Del}\sigma$ and $\Del\backslash\sigma$ are $k$-decomposable. Also, it is trivial that $\sigma$ is a shedding face of $\Del$.

Now suppose that $\alpha=(k_1,\ldots,k_n)\in\NN^n$ and $\Del^\alpha$ is $k$-decomposable. Let $k_i>1$ an set $\beta=(k_1,\ldots,k_{i-1},k_i-1,k_{i+1},\ldots,k_n)$. Then $\Del^\alpha$ is the expansion of $\Del^\beta$ and by above assertion, if $\Del^\alpha$ is $k$-decomposable then $\Del^\beta$ is $k$-decomposable, too. In particular, it follows by induction that $\Del$ is $k$-decomposable, as desired.
\end{proof}

The following theorem which was proved in \cite{Wo}, relates the shellability of a simplicial complex to $d$-decomposability of it.

\begin{thm}\cite[Theorem 3.6]{Wo}\label{decomshell}
A $d$-dimensional simplicial complex
$\Delta$ is shellable if and only if it is $d$-decomposable.
\end{thm}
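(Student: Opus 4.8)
The plan is to prove both implications by induction on the number of facets, matching the recursive definition of $k$-decomposability (taken with $k=d=\dim\Del$, the least restrictive choice, since every face of a $d$-dimensional complex automatically has dimension $\leq d$) against a recursive description of shellability. The engine of the ``if'' direction is a gluing lemma: if $\sigma$ is a shedding face of $\Del$ and both $\Del\setminus\sigma$ and $\lk_{\Del}(\sigma)$ are shellable, then $\Del$ is shellable. Granting this, the backward implication is immediate, for a non-simplex $d$-decomposable complex has by definition a shedding face $\sigma$ with $\Del\setminus\sigma$ and $\lk_{\Del}(\sigma)$ again $d$-decomposable; these have strictly fewer faces and dimension $\leq d$, so by induction they are shellable, and the gluing lemma promotes this to a shelling of $\Del$.

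To prove the gluing lemma I would first extract from the exchange property of a shedding face the key structural fact that every facet of the deletion $\Del\setminus\sigma$ is already a facet of $\Del$ (if $F$ were a facet of $\Del\setminus\sigma$ but not of $\Del$, choose $F\cup\{u\}\in\Del$; maximality in the deletion forces $\sigma\subseteq F\cup\{u\}$, whence $\sigma\setminus F=\{u\}$, and applying the exchange property to $\tau=F\cup\{u\}$ with $v=u$ yields $w\notin\tau$ with $F\cup\{w\}\in\Del$ and $\sigma\not\subseteq F\cup\{w\}$, contradicting maximality of $F$). Consequently the facets of $\Del$ split into the facets of $\Del\setminus\sigma$ together with the facets $\sigma\cup G$ as $G$ ranges over facets of $\lk_{\Del}(\sigma)$. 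I then concatenate a shelling of $\Del\setminus\sigma$ followed by $\sigma\cup G_1,\ldots,\sigma\cup G_q$ arising from a shelling $G_1,\ldots,G_q$ of $\lk_{\Del}(\sigma)$, and verify the shelling condition. Inside each block it is inherited from the given shellings (since $\sigma$ is disjoint from the $G_i$, one has $(\sigma\cup G_j)\setminus(\sigma\cup G_i)=G_j\setminus G_i$); across the junction, for a deletion facet $F$ one has $\sigma\not\subseteq F$, so any $v\in\sigma\setminus F$ satisfies $v\in(\sigma\cup G_j)\setminus F$, while $(\sigma\cup G_j)\setminus\{v\}$ is a codimension-one face lying in $\Del\setminus\sigma$ and hence in an earlier facet, which is exactly what the shelling condition demands.

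For the converse I would again induct on the number of facets, the task being to manufacture from a shelling $F_1<\cdots<F_m$ a shedding face whose deletion and link are themselves shellable. The natural candidate is the restriction face $\sigma=R(F_m)$ of the last facet, namely the unique minimal face of $F_m$ whose addition is ``new''; using the restriction structure of a shelling (the faces of $\Del$ outside $\langle F_1,\ldots,F_{m-1}\rangle$ are precisely those sandwiched between $\sigma$ and $F_m$), one checks that $\sigma$ is a shedding face and that both $\Del\setminus\sigma$ and $\lk_{\Del}(\sigma)$ inherit shellings, after which induction applies. I expect this converse to be the main obstacle: unlike the vertex-decomposable case one genuinely needs a higher-dimensional shedding face, since a shellable complex need not be vertex decomposable, and the delicate point is controlling the shellability of the deletion $\Del\setminus\sigma$ along a non-vertex face, because deletions by arbitrary faces do not in general preserve shellability and one must exploit the specific shape of $\sigma$ as a restriction face.

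Finally, I would note an alternative algebraic route fitting this paper's framework. The complex $\Del$ is shellable exactly when $I_{\Del^{\vee}}$ has linear quotients, while Theorem \ref{kdual} identifies $d$-decomposability of $\Del$ with $d$-decomposability of the ideal $I_{\Del^{\vee}}$; the theorem then reduces to checking that, for a squarefree monomial ideal, having linear quotients coincides with being decomposable at the top level, the shedding monomials furnishing the linear-quotient variables. This reformulation has the advantage of replacing the awkward face-extraction step above by a statement about orderings of the generators of $I_{\Del^{\vee}}$, but it merely relocates the same core difficulty into the language of \cite{RaYa}.
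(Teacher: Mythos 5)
This statement is not proved in the paper at all: it is quoted from Woodroofe \cite[Theorem 3.6]{Wo}, so the only proof to compare against is Woodroofe's original one, and your argument is correct and essentially reconstructs it --- the gluing lemma (every facet of $\Delta\setminus\sigma$ is a facet of $\Delta$ when $\sigma$ is a shedding face, then concatenate a shelling of $\Delta\setminus\sigma$ with the facets $\sigma\cup G_i$ ordered by a shelling of $\lk_{\Delta}(\sigma)$) and, for the converse, the restriction face $\sigma=R(F_m)$ of the last facet are exactly Woodroofe's devices. One remark: the ``delicate point'' you anticipate in the converse in fact dissolves, because the interval property of a shelling (the faces of $\langle F_1,\ldots,F_m\rangle$ outside $\langle F_1,\ldots,F_{m-1}\rangle$ are precisely those in $[R(F_m),F_m]$) shows that no face of $\langle F_1,\ldots,F_{m-1}\rangle$ contains $\sigma$, so $\Delta\setminus\sigma=\langle F_1,\ldots,F_{m-1}\rangle$ on the nose and inherits the shelling trivially, while $\lk_{\Delta}(\sigma)=\langle F_m\setminus\sigma\rangle$ is a simplex; the same interval property also yields the shedding condition for $\sigma$ in a few lines.
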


Using Theorems \ref{decom} and \ref{decomshell}, we get the following corollaries.
\begin{cor}\label{shell}
Let $\Del$ be a simplicial complex and $\alpha\in\NN^n$. Then $\Del$ is shellable if and only if $\Del^\alpha$ is shellable.
\end{cor}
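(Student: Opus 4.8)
The plan is to deduce this purely formally from the two results that immediately precede it, namely Theorem \ref{decom} (expansion preserves $k$-decomposability in both directions) and Theorem \ref{decomshell} (for a complex of dimension $d$, shellability coincides with $d$-decomposability). The only genuine input beyond these two theorems is a dimension count, supplied by Proposition \ref{expansion}(i).

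First I would set $d=\dim(\Del)$ and invoke Proposition \ref{expansion}(i) to record that $\dim(\Del^\alpha)=\dim(\Del)=d$ as well. This is the step that makes everything fit together: it guarantees that the single value $k=d$ is simultaneously the dimension of $\Del$ and of $\Del^\alpha$, so that Theorem \ref{decomshell} can be applied to both complexes with the same exponent. Without this observation one could not directly chain the equivalences, since $d$-decomposability is a dimension-sensitive notion.

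Next I would assemble the chain of equivalences. By Theorem \ref{decomshell} applied to the $d$-dimensional complex $\Del$, we have that $\Del$ is shellable if and only if $\Del$ is $d$-decomposable. By Theorem \ref{decom} (taking $k=d\le\dim(\Del)$), $\Del$ is $d$-decomposable if and only if $\Del^\alpha$ is $d$-decomposable. Finally, by Theorem \ref{decomshell} applied this time to $\Del^\alpha$, which is also $d$-dimensional by the first step, $\Del^\alpha$ is $d$-decomposable if and only if $\Del^\alpha$ is shellable. Concatenating these three biconditionals yields that $\Del$ is shellable if and only if $\Del^\alpha$ is shellable, which is the claim.

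There is no real obstacle here; the statement is a formal corollary and the argument is essentially bookkeeping. The one point requiring care, and the only place where an error could creep in, is making sure the exponent $k$ in the two applications of Theorem \ref{decomshell} is the common dimension $d$ and that this value is admissible in Theorem \ref{decom}. Both are settled by the equality $\dim(\Del^\alpha)=\dim(\Del)$ from Proposition \ref{expansion}(i), so the whole proof reduces to citing these three results in the correct order.
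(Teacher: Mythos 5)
Your proposal is correct and follows exactly the paper's route: the paper derives Corollary \ref{shell} by combining Theorem \ref{decom} with Theorem \ref{decomshell}, with the dimension equality $\dim(\Del^\alpha)=\dim(\Del)$ from Proposition \ref{expansion}(i) doing the (implicit) bookkeeping that you spell out explicitly. Your write-up is simply a more careful version of the same two-theorem argument.
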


\begin{cor}
Let $\Del$ be a simplicial complex and $\alpha\in\NN^n$. Then $I_\Del$ is clean if and only if $I_{\Del^\alpha}$ is clean.
\end{cor}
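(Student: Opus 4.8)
The plan is to reduce cleanness of the Stanley--Reisner ideal to shellability of the underlying complex, and then to quote Corollary~\ref{shell}. Recall that a squarefree monomial ideal $I_\Gamma$ is \emph{clean} when $S/I_\Gamma$ admits a prime filtration all of whose successive quotients have the form $S/\fp$ with $\fp$ a minimal prime of $I_\Gamma$. The essential input is Dress's theorem, which gives a combinatorial characterization of this notion: for an arbitrary simplicial complex $\Gamma$, the ideal $I_\Gamma$ is clean if and only if $\Gamma$ is (nonpure) shellable.

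Granting this, I would argue as follows. First, apply Dress's theorem to $\Del$ to see that $I_\Del$ is clean if and only if $\Del$ is shellable. Next, invoke Corollary~\ref{shell}, which has already established that $\Del$ is shellable if and only if $\Del^\alpha$ is shellable. Finally, apply Dress's theorem once more, now to the expanded complex $\Del^\alpha$, to conclude that $\Del^\alpha$ is shellable if and only if $I_{\Del^\alpha}$ is clean. Chaining these three equivalences yields that $I_\Del$ is clean if and only if $I_{\Del^\alpha}$ is clean, which is exactly the assertion.

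I do not expect any genuine obstacle here: the statement is a formal consequence of the cleanness--shellability dictionary together with the shellability equivalence proved earlier, and all of the combinatorial content concerning the expansion functor has already been absorbed into Corollary~\ref{shell} (and, further upstream, into Theorems~\ref{decom} and~\ref{decomshell}). The one point worth flagging is that Dress's characterization must be used in its \emph{nonpure} form, since the expansion $\Del^\alpha$ need not be pure even when one starts from a pure complex; but this is precisely the generality in which the shellability results of this paper are phrased, so no additional work is required. Consequently the corollary follows immediately, in the same spirit as the preceding consequences of Theorem~\ref{decom}.
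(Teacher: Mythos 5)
Your proposal is correct and matches the paper's approach: the paper derives this corollary directly from Corollary~\ref{shell} (itself a consequence of Theorems~\ref{decom} and~\ref{decomshell}) together with Dress's characterization of cleanness as (nonpure) shellability, exactly as you do. Your remark that the nonpure form of Dress's theorem is needed is a sensible clarification, since the paper's shellability results are indeed stated in that generality.
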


\section{Homological invariants of the Stanley-Reisner ideal of an expanded complex}

In this section, we compare the regularity, the projective dimension and the depth of the Stanley-Reisner rings of a simplicial complex and its expansions.
The following theorem, gives an upper bound for the regularity of the Stanley-Reisner ideal of an expanded complex in terms of $\reg(I_{\Delta})$.

\begin{thm}\label{regs}
Let $\Delta$ be a simplicial complex and $\alpha=(s_1,\ldots,s_n)\in\NN^n$. Then $\reg(I_{\Delta^{\alpha}})\leq \reg(I_{\Delta})+r$, where $r=|\{i:s_i>1\}|$.
\end{thm}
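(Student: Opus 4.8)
The plan is to reduce, via the inductive scheme of Remark \ref{rem1}, to a single vertex duplication, and to prove there a sharp recursion for the regularity. Writing $r(\gamma)=|\{i:\gamma_i>1\}|$, I claim that if $\Gamma'$ is obtained from a complex $\Gamma$ by duplicating a vertex $x$, then
\begin{equation}
\reg(I_{\Gamma'})\leq \max\{\reg(I_\Gamma),\ \reg(I_{\lk_\Gamma(x)})+1\}.\tag{$\dagger$}
\end{equation}
Granting $(\dagger)$ together with the key inequality $\reg(I_{\lk_\Gamma(x)})\leq\reg(I_\Gamma)$ discussed below, a single duplication raises the regularity by at most $1$, and — crucially — duplicating a vertex that already possesses a clone raises it by at most $0$ once the induction is arranged correctly. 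Summing these contributions over the indices with $s_i>1$ produces exactly the additive term $r$.

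To establish $(\dagger)$ I would reuse the Alexander-dual decomposition already obtained inside the proof of Theorem \ref{CM}: for the duplication $\Gamma'$ of $\Gamma$ at $x=x_i$,
$$I_{\Gamma'^\vee}=x'_i I_{\Gamma^\vee}+x_i I_{(\lk_\Gamma(x_i))^\vee},$$
with intersection $x'_ix_i I_{(\lk_\Gamma(x_i))^\vee}$, and this is a Betti splitting by \cite{splitting}. Passing to projective dimensions through the associated short exact sequence (or the Betti-splitting formula of \cite{splitting}), and using that multiplication by a fresh variable and extension of the base ring leave projective dimension unchanged, one obtains $\pd(I_{\Gamma'^\vee})\leq\max\{\pd(I_{\Gamma^\vee}),\ \pd(I_{(\lk_\Gamma(x_i))^\vee})+1\}$. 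Translating back through Terai's formula $\reg(I_\Sigma)=\pd(S/I_{\Sigma^\vee})$ turns this inequality into $(\dagger)$.

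For the induction itself I would induct on $|\alpha|=\sum_i s_i$, the statement being quantified over all complexes simultaneously. If $\alpha=\mathbf 1$ there is nothing to prove. Otherwise choose $i$ with $s_i>1$ and set $\beta=\alpha-e_i$, so that $\Del^\alpha$ is the duplication of $\Del^\beta$ at the vertex $x_{i1}$ (Remark \ref{rem1}). Proposition \ref{expansion}(ii) identifies the relevant link as $\lk_{\Del^\beta}(x_{i1})=(\lk_\Del(x_i))^\beta$, and since the $i$-th coordinate is irrelevant to any expansion of $\lk_\Del(x_i)$, applying $(\dagger)$ gives
$$\reg(I_{\Del^\alpha})\leq\max\{\reg(I_{\Del^\beta}),\ \reg(I_{(\lk_\Del(x_i))^\beta})+1\}.$$
I then apply the induction hypothesis to $\Del^\beta$ (here $|\beta|<|\alpha|$ and $r(\beta)\leq r(\alpha)$) and to $(\lk_\Del(x_i))^\beta$ (here the ambient complex omits $x_i$, so its count of repeated coordinates equals $r(\alpha)-1$, because $s_i>1$ is counted in $r(\alpha)$ but $x_i$ is not a vertex of the link). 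Both estimates collapse to $\reg(I_\Del)+r(\alpha)$ once we invoke $\reg(I_{\lk_\Del(x_i)})\leq\reg(I_\Del)$; the cases $s_i=2$ and $s_i\geq 3$ differ only in whether $r(\beta)=r(\alpha)-1$ or $r(\beta)=r(\alpha)$, and both are absorbed.

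The main obstacle is the auxiliary inequality $(\star)$: $\reg(I_{\lk_\Del(x)})\leq\reg(I_\Del)$ for every vertex $x$. Since the link is not an induced subcomplex, this does not follow from the (standard) monotonicity of regularity under restriction. I would prove it using Hochster's formula, which expresses $\reg(I_\Del)$ as $2+\max\{\ell:\tilde H_\ell((\Del)_W;K)\neq 0\ \text{for some}\ W\}$, together with the Mayer--Vietoris sequence $\cdots\to\tilde H_{\ell+1}((\Del)_{W\cup\{x\}})\to\tilde H_\ell(\lk_{(\Del)_{W\cup\{x\}}}(x))\to\tilde H_\ell((\Del)_W)\to\cdots$ coming from the decomposition $(\Del)_{W\cup\{x\}}=\mathrm{del}(x)\cup(x\ast\lk(x))$ with $x\ast\lk(x)$ contractible. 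Because every induced subcomplex of $\lk_\Del(x)$ has the form $\lk_{(\Del)_{W\cup\{x\}}}(x)$, nonvanishing of its reduced homology in degree $\ell$ forces nonvanishing homology of an induced subcomplex of $\Del$ in degree $\ell$ or $\ell+1$, which yields $(\star)$ and completes the argument.
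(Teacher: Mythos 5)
Your proof is correct, and while its skeleton (reduce to a single vertex duplication, bound the jump in regularity by the regularity of a link, and control links by your inequality $(\star)$) coincides with the paper's, both key inequalities are obtained by genuinely different means. The paper stays on the primal side: from $I_{\Delta'}=(x_ix'_i)+I_{\Delta}+x'_iI_{\lk_{\Delta}(x_i)}$ it forms the short exact sequence $0\to S'/(I_{\Delta'}:x'_i)(-1)\to S'/I_{\Delta'}\to S'/(I_{\Delta'},x'_i)\to 0$, reads off your $(\dagger)$ from standard regularity estimates for exact sequences, and simply cites \cite[Lemma 2.5]{HW} for $(\star)$; moreover it organizes the induction coordinate-by-coordinate (inner induction on $s_i$), which keeps the relevant link equal to $\lk_{\Delta}(x_i)$ itself --- unexpanded --- so the inductive hypothesis is never applied to a link. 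You work instead on the Alexander-dual side, deriving $(\dagger)$ from the sequence $0\to J\cap K\to J\oplus K\to J+K\to 0$ with $J=x'_iI_{\Gamma^\vee}$ and $K=x_iI_{(\lk_{\Gamma}(x_i))^\vee}$ together with Terai's formula, you reprove the H\`a--Woodroofe inequality $(\star)$ from Hochster's formula and Mayer--Vietoris, and you run a single induction on $|\alpha|$ quantified over all complexes --- which your setup genuinely requires, since your link $(\lk_{\Delta}(x_i))^{\beta}$ is itself an expanded complex. What each buys: your route is self-contained (no appeal to \cite{HW}), while the paper's is shorter because it can quote that lemma and because its ordering of the induction needs only the weaker inductive statement about expansions of $\Delta$ itself.

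One caveat. Your assertion that $I_{\Gamma'^\vee}=J+K$ ``is a Betti splitting by \cite{splitting}'' is not justified for arbitrary $\Gamma$: the criterion invoked in the paper, \cite[Corollary 2.4]{splitting}, requires $J=x'_iI_{\Gamma^\vee}$ to have a linear resolution, which by Eagon--Reiner \cite{ER} is equivalent to $\Gamma$ being Cohen--Macaulay; that hypothesis is available in the proof of Theorem \ref{CM} but not in your general setting. This does not break your argument, because you only need the inequality $\pd(J+K)\leq\max\{\pd(J),\pd(K),\pd(J\cap K)+1\}$, which follows unconditionally from the exact sequence $0\to J\cap K\to J\oplus K\to J+K\to 0$ that you offer as an alternative; so the Betti-splitting claim should be deleted and the exact sequence should carry that step.
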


\begin{proof}
It is enough to show that for any $1\leq i\leq n$,
\begin{equation}\label{rs}
 \reg(I_{\Delta^{(1,\ldots,1,s_i,1,\ldots,1)}})\leq\reg(I_{\Delta})+1.
\end{equation}
Then from the equality $\Delta^{(s_1,\ldots,s_n)}=(\Delta^{(s_1,\ldots,s_{n-1},1)})^{(1,\ldots,1,s_n)}$, we have
$$\reg(I_{\Delta^{(s_1,\ldots,s_n)}})\leq\reg(I_{\Delta^{(s_1,\ldots,s_{n-1},1)}})+1,$$
 and one can get the result by induction on $n$.

To prove (\ref{rs}), we proceed by induction on $s_i$.
First we show that the inequality $\reg(I_{\Delta'})\leq\reg(I_{\Delta})+1$ holds for any expansion of $\Delta$ obtained by duplicating a vertex. Let $X$ be the vertex set of $\Delta$, $x_i\in X$ and $\Delta'$ be an expansion of $\Delta$ by duplicating $x_i$ on the vertex set $X'=X\cup\{x'_i\}$. Then $\Delta'=\Delta \cup \langle  (F\setminus\{x_i\})\cup\{x'_i\}:\ F\in \mathcal{F}(\Delta), x_i\in F\rangle$.
For a subset $Y\subseteq X$, let $P_Y=(x_i:\ x_i\in Y)$. By \cite[Proposition 5.3.10]{villarreal}, $$I_{\Delta'}=\bigcap_{F\in \mathcal{F}(\Delta')}P_{X'\setminus F}=\bigcap_{F\in \mathcal{F}(\Delta)}(P_{X\setminus F}+(x'_i))\bigcap (\bigcap_{F\in \mathcal{F}(\Delta),x_i\in F}P_{X'\setminus ((F\setminus\{x_i\})\cup\{x'_i\})})=$$
$$((x'_i)+\bigcap_{F\in \mathcal{F}(\Delta)}P_{X\setminus F})\bigcap ((x_i)+\bigcap_{F\in \mathcal{F}(\Delta),x_i\in F}P_{X\setminus F}).$$
So
\begin{equation}\label{stanexpan}
I_{\Delta'}=(x_ix'_i)+I_{\Delta}+x'_iI_{\lk_{\Delta}(x_i)}.
\end{equation}

Let $S'=S[x'_i]$ and consider the following short exact sequence of graded modules
$$0\longrightarrow  S'/(I_{\Delta'}:x'_i)(-1)  \longrightarrow  S'/I_{\Delta'}\longrightarrow  S'/(I_{\Delta'},x'_i) \longrightarrow 0.$$
By equation (\ref{stanexpan}), one can see that  $(I_{\Delta'},x'_i)=(I_{\Delta},x'_i)$ and $(I_{\Delta'}:x'_i)=(I_{\lk_{\Delta}(x_i)},x_i)$. So we have the exact sequence
$$0\longrightarrow  (S'/(x_i,I_{\lk_{\Delta}(x_i)}))(-1) \longrightarrow  S'/I_{\Delta'} \longrightarrow  S'/(x'_i,I_{\Delta}) \longrightarrow 0.$$
Thus by \cite[Lemma 3.12, Lemma 3.5]{MoVi},
\begin{equation}\label{r0}
\reg(S'/I_{\Delta'})\leq\max\{\reg(S/I_{\lk_{\Delta}(x_i)})+1,\reg (S/I_{\Delta})\}.
\end{equation}
By \cite[Lemma 2.5]{HW}, $\reg(S/I_{\lk_{\Delta}(x_i)})\leq\reg (S/I_{\Delta})$.
So $\reg(S'/I_{\Delta'})\leq\reg (S/I_{\Delta})+1$ or equivalently $\reg(I_{\Delta'})\leq\reg (I_{\Delta})+1$.
Note that $\Delta^{(1,\ldots,1,s_i,1,\ldots,1)}$ is obtained from $\Delta^{(1,\ldots,1,s_i-1,1,\ldots,1)}$ by duplicating $x_i$. Thus by (\ref{r0}),
$$\reg(I_{\Delta^{(1,\ldots,1,s_i,1,\ldots,1)}})\leq\max\{\reg(I_{\lk_{\Delta^{(1,\ldots,1,s_i-1,1,\ldots,1)}}(x_i)})+1,\reg (I_{\Delta^{(1,\ldots,1,s_i-1,1,\ldots,1)}})\}.$$

But $\lk_{\Delta^{(1,\ldots,1,s_i-1,1,\ldots,1)}}(x_i)=\lk_{\Delta}(x_i)$.
Also by induction hypothesis $\reg(I_{\Delta^{(1,\ldots,1,s_i-1,1,\ldots,1)}})\leq\reg (I_{\Delta})+1$.
Again using \cite[Lemma 2.5]{HW}, we get the result.
\end{proof}

The next result which generalizes \cite[Theorem 3.1]{KhMo} explains $\pd(S^{\alpha}/I_{\Del^{\alpha}})$ in terms of $\pd(S/I_{\Del})$ for a sequentially Cohen-Macaulay simplicial complex.

\begin{thm}\label{pd}
Let $\Delta$ be a sequentially Cohen-Macaulay simplicial complex on $X$  and $\alpha=(s_1,\ldots,s_n)$. Then $$\pd(S^{\alpha}/I_{\Del^{\alpha}})=\pd(S/I_{\Del})+s_1+\cdots+s_n-n$$
and
$$\depth(S^{\alpha}/I_{\Del^{\alpha}})=\depth(S/I_{\Del}).$$
\end{thm}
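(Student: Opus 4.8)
The plan is to prove the projective dimension formula first and then obtain the depth formula for free from the Auslander-Buchsbaum equality, since the two ambient polynomial rings have $\depth(S)=n$ and $\depth(S^\alpha)=s_1+\cdots+s_n$ (the number of variables in each). The starting point is Corollary \ref{SCM}: because $\Delta$ is sequentially Cohen-Macaulay, so is $\Delta^\alpha$, so both $S/I_\Delta$ and $S^\alpha/I_{\Delta^\alpha}$ are sequentially Cohen-Macaulay quotients. The whole computation then rests on a single clean identity valid for such quotients, namely $\pd = \bight$, which I would establish once for a general sequentially Cohen-Macaulay complex $\Gamma$ on a vertex set $V$ and apply to both $\Delta$ and $\Delta^\alpha$.

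To get that identity I would run the standard Alexander-duality chain. By Terai's formula $\pd(S/I_\Gamma)=\reg(I_{\Gamma^\vee})$, and by the Herzog-Hibi characterization the dual ideal $I_{\Gamma^\vee}$ is componentwise linear exactly when $\Gamma$ is sequentially Cohen-Macaulay. For a componentwise linear ideal the Castelnuovo-Mumford regularity equals the largest degree of a minimal generator. Since the minimal generators of $I_{\Gamma^\vee}$ are the monomials $x^{V\setminus F}$ of degree $|V|-|F|$ indexed by $F\in\mathcal{F}(\Gamma)$, this gives $\reg(I_{\Gamma^\vee})=|V|-\min_{F}|F|$. As $I_\Gamma$ is squarefree its associated primes are the minimal primes $P_{V\setminus F}$ with $\h(P_{V\setminus F})=|V|-|F|$, so the same quantity is exactly $\bight(I_\Gamma)$. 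Hence $\pd(S/I_\Gamma)=|V|-\min_{F\in\mathcal{F}(\Gamma)}|F|=\bight(I_\Gamma)$.

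The remaining step is purely combinatorial bookkeeping about how expansion acts on facets. By the definition of $\langle F\rangle^\alpha$, every facet $G$ of $\Delta^\alpha$ satisfies $\bar G\in\mathcal{F}(\Delta)$ with $|G|=|\bar G|$, and conversely each facet of $\Delta$ lifts to facets of $\Delta^\alpha$ of the same cardinality; thus the facets of $\Delta^\alpha$ are precisely the expanded facets of $\Delta$, and $\min\{|G|:G\in\mathcal{F}(\Delta^\alpha)\}=\min\{|F|:F\in\mathcal{F}(\Delta)\}$. Since the vertex set $X^\alpha$ has $s_1+\cdots+s_n$ elements while $X$ has $n$, subtracting the two instances of the identity above yields
$$\pd(S^\alpha/I_{\Delta^\alpha})-\pd(S/I_\Delta)=(s_1+\cdots+s_n)-n,$$
which is the first asserted equality. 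The depth statement then follows formally: applying Auslander-Buchsbaum in each ring gives $\pd(S^\alpha/I_{\Delta^\alpha})+\depth(S^\alpha/I_{\Delta^\alpha})=s_1+\cdots+s_n$ and $\pd(S/I_\Delta)+\depth(S/I_\Delta)=n$, and substituting the projective dimension formula cancels the term $s_1+\cdots+s_n-n$ to leave $\depth(S^\alpha/I_{\Delta^\alpha})=\depth(S/I_\Delta)$.

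I expect the main obstacle to be the careful invocation of the $\pd=\bight$ identity rather than the combinatorics of expansion. One must use the correct direction and normalization of Terai's formula, verify that it is genuinely the \emph{big} height (the maximal height among associated primes, realized by the smallest facet) that appears, and confirm that the Herzog-Hibi componentwise-linearity hypothesis is exactly the sequential Cohen-Macaulay assumption that Corollary \ref{SCM} guarantees is inherited by $\Delta^\alpha$. By contrast, the identification of $\mathcal{F}(\Delta^\alpha)$ with the expanded facets of $\Delta$ and the resulting preservation of minimal facet size are routine once made precise, and the Auslander-Buchsbaum passage from projective dimension to depth is entirely formal.
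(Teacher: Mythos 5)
Your proposal is correct and takes essentially the same route as the paper: transfer of sequential Cohen--Macaulayness to $\Del^\alpha$ via Corollary \ref{SCM}, the identity $\pd(S/I_{\Del})=\bight(I_{\Del})$ for sequentially Cohen--Macaulay complexes, preservation of the minimal facet cardinality under expansion, and the Auslander--Buchsbaum formula for the depth statement. The only difference is that you derive the $\pd=\bight$ identity yourself from Terai's formula together with the Herzog--Hibi componentwise-linearity characterization, whereas the paper simply cites it as \cite[Corollary 3.33]{MoVi}.
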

\begin{proof}
By Corollary \ref{SCM}, $\Delta^{\alpha}$ is sequentially Cohen-Macaulay too. Thus by \cite[Corollary 3.33]{MoVi}, $$\pd(S^{\alpha}/I_{\Del^\alpha})=\bight(I_{\Del^\alpha})$$
and $$\pd(S/I_\Del)=\bight(I_\Del).$$ Let $t=\min\{|F|:\ F\in \F(\Del)\}$.  Then
$\min\{|F|:\ F\in \F(\Del^{\alpha})\}=t$, $\bight(I_{\Del})=n-t$ and
$$\bight(I_{\Del^{\alpha}})=|X^{\alpha}|-t=s_1+\cdots+s_n-t=s_1+\cdots+s_n+\pd(S/I_\Del)-n.$$
The second equality holds by Auslander-Buchsbaum formula. Note that $\depth(S^\alpha)=s_1+\cdots+s_n$.
\end{proof}

Let $\fm$ and $\fn$ be, respectively, the maximal ideals of $S$ and $S^\alpha$.

\begin{thm}\label{localco}
(Hochster \cite{HeHi}) Let $\ZZ^n_{-}=\{\a\in\ZZ^n:a_i\leq 0\ \mbox{for}\ i=1,\ldots,n\}$. Then
$$\dim H^i_\fm(K[\Del])_\a=\left\{
                     \begin{array}{ll}
                      \dim\tilde{H}_{i-|F|-1}(\lk_\Del (F);K) , & \hbox{if}\ \a\in\ZZ^n_{-},\ \hbox{where}\ F=\supp(\a) \\
                      0, & \hbox{if}\ \a\not\in\ZZ^n_{-}.
                     \end{array}
                   \right.
$$
\end{thm}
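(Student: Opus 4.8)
The plan is to compute $H^i_\fm(K[\Del])$ by means of the \v{C}ech complex on the variables $x_1,\ldots,x_n$, which generate $\fm$. Concretely, $H^i_\fm(K[\Del])$ is the $i$-th cohomology of the complex $\check{C}^\bullet$ whose term in cohomological degree $t$ is $\bigoplus_{F\subseteq\{1,\ldots,n\},\,|F|=t}K[\Del]_{x^F}$, where $x^F=\prod_{i\in F}x_i$ and the maps are alternating sums of the natural localization maps. Since the whole construction is $\ZZ^n$-graded, I would fix a degree $\a\in\ZZ^n$ once and for all and compute the cohomology of the single complex of $K$-vector spaces $(\check{C}^\bullet)_\a$; the formula of Hochster then reads off degree by degree.

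The first technical step is to pin down the graded pieces of the localizations. One checks that $K[\Del]_{x^F}\neq 0$ exactly when $F\in\Del$, and that for such $F$ the component $(K[\Del]_{x^F})_\a$ is one-dimensional precisely when $\{i:a_i<0\}\subseteq F$ and $\{i:a_i>0\}\cup F\in\Del$, and is zero otherwise (by clearing a large power of $x^F$ one reduces nonvanishing to $\{i:a_i>0\}\cup F\in\Del$). Thus $(\check{C}^\bullet)_\a$ is the cochain complex of $K$-vector spaces spanned by those subsets $F$ satisfying these two conditions, equipped with the \v{C}ech differential.

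Next I would split into two cases according to the sign pattern of $\a$. If $\a\notin\ZZ^n_-$, pick $j$ with $a_j>0$. Since then $j\in\{i:a_i>0\}$, the two conditions above are insensitive to whether $j\in F$; hence the admissible subsets come in pairs $F\setminus\{j\}$, $F\cup\{j\}$, and a standard homotopy on $x_j$ shows that $(\check{C}^\bullet)_\a$ is acyclic, so $H^i_\fm(K[\Del])_\a=0$, giving the second line of the formula. If instead $\a\in\ZZ^n_-$, set $F_0=\supp(\a)=\{i:a_i<0\}$; the admissible $F$ are exactly the faces of $\Del$ containing $F_0$, and these correspond bijectively, via $F\mapsto F\setminus F_0$, to the faces of $\lk_\Del(F_0)$. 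Under this correspondence a subset $F$ with $|F|=i$ maps to a face of $\lk_\Del(F_0)$ of dimension $i-|F_0|-1$, and I would check that the \v{C}ech differential agrees, up to a global sign coming from the incidence of $F_0$, with the simplicial coboundary. Hence $(\check{C}^\bullet)_\a$ is a degree shift of the reduced simplicial cochain complex of $\lk_\Del(F_0)$, so that $H^i_\fm(K[\Del])_\a\cong\tilde{H}^{i-|F_0|-1}(\lk_\Del(F_0);K)$; working over the field $K$, the universal coefficient theorem gives $\dim_K\tilde{H}^{j}(\lk_\Del(F_0);K)=\dim_K\tilde{H}_{j}(\lk_\Del(F_0);K)$, which yields the first line with $F=\supp(\a)$.

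The main obstacle is the bookkeeping in the second case: matching the \v{C}ech boundary maps to the simplicial coboundary maps with the correct signs, and handling the reduced (augmentation) term corresponding to $F=F_0$ so that the homological shift by $|F_0|+1$ comes out exactly right. By contrast, the acyclicity argument for $\a\notin\ZZ^n_-$ is routine once the pairing by $x_j$ is set up, and the passage from simplicial cohomology to homology is immediate over a field.
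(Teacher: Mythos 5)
The paper offers no proof of this statement at all: it is quoted as Hochster's formula, with a citation to \cite{HeHi}, so there is nothing internal to compare against. Your outline is correct and is precisely the standard proof found in the cited sources (and in Bruns--Herzog): compute $H^i_\fm(K[\Del])$ by the $\ZZ^n$-graded \v{C}ech complex on $x_1,\ldots,x_n$, identify the degree-$\a$ pieces of the localizations $K[\Del]_{x^F}$ as you do, kill the case $\a\notin\ZZ^n_{-}$ by the pairing $F\leftrightarrow F\cup\{j\}$ for $a_j>0$, and for $\a\in\ZZ^n_{-}$ identify $(\check{C}^\bullet)_\a$ with the reduced cochain complex of $\lk_\Del(\supp(\a))$ shifted by $|\supp(\a)|+1$, finishing with universal coefficients over the field $K$; the sign bookkeeping you flag is genuinely the only remaining routine work.
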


\begin{thm}
Let $\Del$ be a simplicial complex on $X$ and $\alpha=(s_1,\ldots,s_n)$. Then $\depth(K[\Del^\alpha])\leq\depth(K[\Del])$.
It follows that $\pd(S^{\alpha}/I_{\Del^{\alpha}})\geq\pd(S/I_{\Del})+s_1+\cdots+s_n-n$.
\end{thm}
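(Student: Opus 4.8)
The plan is to read off both depths from local cohomology via Hochster's formula (Theorem \ref{localco}) and to transport the homological information across the expansion using the epimorphism of Proposition \ref{expansion}(iii) together with the link identity of Proposition \ref{expansion}(ii). Recall Grothendieck's characterization $\depth(M)=\min\{i:\ H^i_\fm(M)\neq 0\}$ for a finitely generated graded module $M$, so it suffices to produce, for $d:=\depth(K[\Del])$, a nonzero graded piece of $H^d_\fn(K[\Del^\alpha])$, which would force $\depth(K[\Del^\alpha])\leq d$.

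First I would note that $H^d_\fm(K[\Del])\neq 0$, so by Theorem \ref{localco} there is some $\a\in\ZZ^n_-$ with $F=\supp(\a)\in\Del$ and $\tilde H_{d-|F|-1}(\lk_\Del(F);K)\neq 0$. Since reduced homology vanishes above the dimension of the complex, this nonvanishing automatically forces $d-|F|-1\leq\dim(\lk_\Del(F))$, which is precisely the range in which Proposition \ref{expansion}(iii) supplies its epimorphism.

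Next I would transport this to $\Del^\alpha$. Because $F\in\Del$, the simplex expansion $\langle F\rangle^\alpha$ is a subcomplex of $\Del^\alpha$; pick any facet $G$ of $\langle F\rangle^\alpha$, so $|G|=|F|$ and, by Proposition \ref{expansion}(ii), $\lk_{\Del^\alpha}(G)=(\lk_\Del(F))^\alpha$. Applying Proposition \ref{expansion}(iii) to the complex $\lk_\Del(F)$ (of which $(\lk_\Del(F))^\alpha$ is literally the expansion) yields a surjection
$$\tilde H_{d-|F|-1}\big((\lk_\Del(F))^\alpha;K\big)\longrightarrow \tilde H_{d-|F|-1}(\lk_\Del(F);K).$$
As the target is nonzero, so is the source; that is, $\tilde H_{d-|G|-1}(\lk_{\Del^\alpha}(G);K)\neq 0$. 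Choosing $\b\in\ZZ^N_-$ with $N=s_1+\cdots+s_n$ and $\supp(\b)=G$, and invoking Theorem \ref{localco} for $\Del^\alpha$, gives $\dim H^d_\fn(K[\Del^\alpha])_\b=\dim\tilde H_{d-|G|-1}(\lk_{\Del^\alpha}(G);K)\neq 0$. Hence $H^d_\fn(K[\Del^\alpha])\neq 0$ and $\depth(K[\Del^\alpha])\leq d=\depth(K[\Del])$.

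Finally, the projective dimension statement falls out of the Auslander-Buchsbaum formula over both rings. Since $\depth(S)=n$ and $\depth(S^\alpha)=s_1+\cdots+s_n$, we get $\pd(S/I_\Del)=n-\depth(K[\Del])$ and $\pd(S^\alpha/I_{\Del^\alpha})=(s_1+\cdots+s_n)-\depth(K[\Del^\alpha])$; combining these with the depth inequality just proved gives $\pd(S^\alpha/I_{\Del^\alpha})\geq\pd(S/I_\Del)+s_1+\cdots+s_n-n$. The only points needing care are the bookkeeping that the face $F$ produced by Hochster's formula indeed lies in $\Del$ (so that $G\in\Del^\alpha$) and that the homological degrees match under $|G|=|F|$; I expect the main (mild) obstacle to be verifying that the epimorphism of Proposition \ref{expansion}(iii) applies verbatim to $\lk_\Del(F)$ in place of $\Del$, which is immediate once one observes that $(\lk_\Del(F))^\alpha$ is an expansion of $\lk_\Del(F)$.
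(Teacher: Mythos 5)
Your proposal is correct and is essentially the paper's own argument: both proofs read depth off local cohomology via Hochster's formula (Theorem \ref{localco}), transport the relevant reduced homology across the expansion using the link identity and the epimorphism of Proposition \ref{expansion}(ii),(iii), and finish with the Auslander--Buchsbaum formula. The only difference is one of direction --- you show non-vanishing of $H^{d}_{\fn}(K[\Del^{\alpha}])$ at $d=\depth(K[\Del])$ by lifting a nonzero homology class' existence upward, while the paper shows vanishing of $H^{i}_{\fm}(K[\Del])$ for all $i<\depth(K[\Del^{\alpha}])$ by pushing vanishing downward; these are contrapositive formulations of the same argument, and your bookkeeping (choosing a facet $G$ of $\langle F\rangle^{\alpha}$ so that $|G|=|F|$ and the homological degrees match) is if anything more careful than the paper's.
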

\begin{proof}
Set $s=\sum_is_i$. For $\a\in\ZZ^k$, set $s_0=0$ and for $1\leq i\leq n$ set $\bar{\a}(i)=\sum^{s_i}_{j=s_{i-1}+1}\a(j)$. Let $\a\in\ZZ^s_-$ and $F=\supp(\a)$. It follows from Theorem \ref{localco} that $\dim H^i_\fn(K[\Del^\alpha])_\a=\dim\tilde{H}_{i-|F|-1}(\lk_{\Del^\alpha}(F);K)$.

On the other hand by \cite[Theorem 6.2.7]{BrSh} $\depth(K[\Del])$ is the least integer $i$ such that $H^i_\fm(K[\Del])\neq 0$. Now, if $\depth(K[\Del^\alpha])=d$, then $\tilde{H}_{d-|F|-1}(\lk_{\Del^\alpha}(F);K)\neq 0$ and $\tilde{H}_{i-|F|-1}(\lk_{\Del^\alpha}(F);K)\neq 0$ for any $i<d$. By Proposition \ref{expansion}, one can see that
$\dim\tilde{H}_{i-|\bar{F}|-1}(\lk_\Del(\bar{F});K)=\dim H^i_\fm(K[\Del])_{\bar{\a}}=0$ for any $i<d$. This obtains the assertion.

The second inequality holds by Auslander-Buchsbaum formula.
\end{proof}

\textbf{Acknowledgments:}
The authors would like to thank the referee for careful reading of the paper. The research of Rahim Rahmati-Asghar and Somayeh Moradi were in part supported by a grant from IPM (No. 94130029) and (No. 94130021).

\ \\ \\
Rahim Rahmati-Asghar,\\
Department of Mathematics, Faculty of Basic Sciences,
University of Maragheh, P. O. Box 55181-83111, Maragheh, Iran and School of Mathematics, Institute
for Research in Fundamental Sciences (IPM), P.O.Box 19395-5746, Tehran, Iran.\\
E-mail:  \email{rahmatiasghar.r@gmail.com}
\ \\ \\
Somayeh Moradi,\\
Department of Mathematics, Ilam University, P.O.Box 69315-516,
Ilam, Iran and School of Mathematics, Institute
for Research in Fundamental Sciences (IPM), P.O.Box 19395-5746, Tehran, Iran.
E-mail: \email{somayeh.moradi1@gmail.com}

\end{document}